\theoremstyle{plain}
\newtheorem{thm}{\it Theorem}[section]
\newtheorem{prop}[thm]{\it Proposition}
\newtheorem{cor}[thm]{\it Corollary}
\newtheorem{lem}[thm]{\it Lemma}
\theoremstyle{remark}
\newtheorem{defn}[thm]{Def{}inition}
\newtheorem{rem}[thm]{Remark}
\newtheorem{exa}[thm]{Example}
\numberwithin{equation}{section}
\begin{document}
\title [Matrix-valued Riesz bases]{Operators for matrix-valued Riesz bases over LCA groups}

\author[Jyoti]{Jyoti}
	\address{{\bf{Jyoti}}, Department of Mathematics,
		University of Delhi, Delhi-110007, India.}
	\email{jyoti.sheoran3@gmail.com}

\author[Lalit   Kumar Vashisht]{Lalit  Kumar  Vashisht$^*$}
	\address{{\bf{Lalit  Kumar  Vashisht}}, Department of Mathematics,
		University of Delhi, Delhi-110007, India.}
	\email{lalitkvashisht@gmail.com}

\begin{abstract}
The image of a given orthonormal basis for a separable Hilbert space $\mathcal{H}$ under a bijective, bounded, and linear operator acting on $\mathcal{H}$ is called a Riesz basis of $\mathcal{H}$.
 Contrary to what happens with Riesz bases (in the usual sense) in separable Hilbert spaces, it is not true in general that the image of a matrix-valued orthonormal basis under a bounded, linear, and bijective operator on  $L^2(G, \mathbb{C}^{s\times r})$ is also a basis and frame for the space $L^2(G, \mathbb{C}^{s\times r})$, where $G$ is a $\sigma$-compact and  metrizable locally compact abelian (LCA) group. We give   some classes of operators for the construction of matrix-valued Riesz bases from orthonormal bases of the space $L^2(G, \mathbb{C}^{s\times r})$. Motivated by a result due to  Holub, we show that a bounded, linear, and  bijective operator acting on $L^2(G, \mathbb{C}^{s\times r})$ which is adjointable with respect to the matrix-valued inner product  is positive if and only if it maps a matrix-valued Riesz basis of the space $L^2(G, \mathbb{C}^{s\times r})$ to its dual Riesz basis.

\end{abstract}

\subjclass[2020]{ 42C15,  42C30, 47B65.}

\keywords{ Riesz basis, orthonormal basis, frames, positive operator, locally compact group.\\
The research of Jyoti is supported by the  WISE-PDF research grant of WISE--KIRAN Division, Department of Science and Technology (DST), Government of India (Grant No.: DST/WISE-PDF/PM-6/2023 ).  Lalit \break  Kumar Vashisht is  supported by the Faculty Research Programme Grant-IoE, University of Delhi \ (Grant No.: Ref. No./IoE/2023-24/12/FRP).\\
$^*$Corresponding author}

\maketitle

\baselineskip15pt
\section{Introduction}
 Riesz bases are ubiquitous in  basis theory for separable Hilbert spaces, leading further to new developments in important areas of  analysis such as stable synthesis and analysis of functions (details below) in the underlying Hilbert space.
A collection of vectors $\{x_k\}_{ k \in \mathbb{N}}$ in a separable  Hilbert space  $\mathcal{H}$ is  called a \emph{Riesz basis} for $\mathcal{H}$ if $x_k = Te_k$ for all $k \in \mathbb{N}$, where $T$ is a bounded, linear, and bijective operator acting on $\mathcal{H}$ and $\{e_k\}_{ k \in \mathbb{N}}$ is an orthonormal basis for $\mathcal{H}$.   Heil \cite{Heil}, Novikov, Protasov and Skopina \cite{NPS}, Young \cite{Young} are good texts for basic theory of Riesz bases. Recent work on different types of Riesz bases can be found in Favier and  Zalik \cite{FZ} and  Jyoti and Vashisht \cite{JV23}.  A Riesz basis for $\mathcal{H}$ is also a frame that gives a series representation, not necessarily unique, of each vector in $\mathcal{H}$. To be precise, if $\{x_k\}_{ k \in \mathbb{N}}$ is a Riesz basis for $\mathcal{H}$, then the map $S: \mathcal{H}  \rightarrow \mathcal{H}$ given by $Sx= \sum_{k \in \mathbb{N}}\langle x, x_k\rangle x_k$ is  bounded, linear, self-adjoint, positive and invertible operator on $\mathcal{H}$. This gives, $x = S S^{-1}x = \sum_{k \in \mathbb{N}}\langle x, S^{-1}x_k\rangle x_k$ for all $x \in \mathcal{H}$. Thus, a frame for $\mathcal{H}$ is complete, that is $\overline{\text{span}}\{x_k\}_{ k \in \mathbb{N}}= \mathcal{H}$. We refer to texts by  Heil \cite{Heil},  Young \cite{Young}, and a research article by  Heil and Walnut \cite{Heil2} for the basics of frames and their connection to Riesz bases.

 Holub, in \cite{Holub}, studied Riesz bases in separable Hilbert spaces. He characterized Riesz bases in terms of operators associated with frames for separable Hilbert spaces. Holub  proved in \cite{HolubII} that a normalized Riesz basis for a separable Hilbert space $H$ induces a new, but equivalent inner product, in which it is an orthonormal basis.  A corollary of the technique of proof of this result offers a series representation for all positive isomorphisms on a Hilbert space. On the other hand, Antol\'{i}n and Zalik \cite{AZ}, Xia \cite{XIA2} showed the importance of matrix-valued wavelets and orthonormal bases in matrix-valued function spaces in the study of matrix-valued signals. Recent development on matrix-valued orthonormal bases and frames over the euclidean space $\mathbb{R}^d$ can be found in \cite{JV1}. The rise of frame theory has led to new applications of Riesz bases, for example, in signal processing. To be exact, a signal (function) can be decomposed and analyzed in terms of Riesz bases.  One of the most important recent advances concerns finding classes of operators that can preserve or generate bases and frames in signal spaces by their action on a given vector, basis or frame in the space \cite{AA1, AA3}. Given  a  $\sigma$-compact and  metrizable locally compact abelian (LCA) group $G$, matrix-valued Riesz bases and frames of the space $L^2(G, \mathbb{C}^{s\times r})$ differ greatly from the standard Riesz bases and frames, respectively, in separable Hilbert spaces. This is illustrated by examples and applications in the recent study on  matrix-valued frames over the euclidean space $\mathbb{R}^d$ in \cite{JVK00, JV1}.

 Motivated by the  works mentioned above, we study matrix-valued Riesz basis over LCA groups. The first aim of this paper is to provide classes of operators that give matrix-valued Riesz bases from matrix-valued orthonormal bases for  $L^2(G, \mathbb{C}^{s\times r})$. Other notable contribution is  a    characterization of  positive operators acting on $L^2(G, \mathbb{C}^{s\times r})$ that are adjointable with respect to the matrix-valued inner product. This is inspired by a related result for standard Riesz bases in separable Hilbert spaces due to Holub \cite{HolubII}.  This study  is important because the frame operator of a frame is also positive, and the study of operators that give frame conditions is a hot topic for research in mathematical physics and operator theory \cite{AA3, Heil}.

 The structure of the paper is  as follows: To make the paper self-contained, Section \ref{secII} gives some basic definitions and results from operator theory and matrix-valued function spaces over LCA groups.
  In Section \ref{secIII}, we give  main results. First, we give some classes of operators for the  construction of matrix-valued Riesz bases from   matrix-valued orthonormal bases  of the space  $L^2(G, \mathbb{C}^{s\times r})$, see
  Theorem \ref{thpi},  Theorem \ref{thm3.3I}. The adjointability of the square root of positive operators acting on $L^2(G, \mathbb{C}^{s\times r})$, with respect to the matrix-valued inner product, can be found in Proposition \ref{prosq}.  Interplay between matrix-valued Riesz basis in terms of positive operators  on $L^2(G, \mathbb{C}^{s\times r})$ is given in Theorem \ref{thpsq}. The construction of two matrix-valued Riesz bases from a given matrix-valued Riesz basis of $L^2(G, \mathbb{C}^{s\times r})$ is given in Theorem \ref{thmpol}. As an application of above results Theorem \ref{thmpwr}, Theorem \ref{thmmisc24}--Theorem \ref{clarefii} give operators on $L^2(G, \mathbb{C}^{s\times r})$ for the construction of matrix-valued Riesz basis. Finally, Theorem \ref{mainthmI} provides a characterization of positive operators on  $L^2(G, \mathbb{C}^{s\times r})$  that are adjointable with respect to the matrix-valued inner product in terms of matrix-valued Riesz bases.
\section{Preliminaries}\label{secII}
This section gives basic definitions and collects some results about positive operators and matrix-valued function spaces over LCA groups.
\subsection{Positive operators}
  As is standard, the space of bounded linear operators acting on $\mathcal{H}$ is denoted by $\mathcal{B}(\mathcal{H})$. The  \emph{Hilbert-adjoint operator} of an operator $T \in \mathcal{B}(\mathcal{H})$ is an operator $T^{*} \in \mathcal{B}(\mathcal{H})$, defined uniquely by the equation:
$\langle T x, y\rangle = \langle x, T^{*}y\rangle \ \text{for all} \ x$, $y \in \mathcal{H}$.
In this case, we say that $T$ is \emph{adjointable} with respect to the inner product $\langle \cdot, \cdot \rangle$.
A self-adjoint operator $T \in \mathcal{B}(\mathcal{H})$ is said to be  \emph{positive}  if $\langle Tx,x \rangle \ge 0 $ for every $x \in \mathcal{H}$. If there exists an operator $W$ such that $W^2=T$, then $W$ is known as a \emph{square root} of $T$. The following result guarantees the existence of at least one square root of every positive operator. This can  be found in any standard text on analysis, e.g., Heuser \cite{HH}.
\begin{thm}\cite{HH}\label{thpo}
Let $T \in \mathcal{B}(\mathcal{H})$ be positive. Then, there exists a unique bounded and positive operator $W$ such that $W^2=T$. The operator $W$ is known as the square root of $T$ and is denoted by $T^{1/2}$. The operator $T^{1/2}$ can be expressed as a limit of a sequence of polynomials in T. Moreover, $T^{1/2}$  is invertible if $T$ is invertible.
\end{thm}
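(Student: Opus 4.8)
The plan is to construct $T^{1/2}$ explicitly as the strong limit of an operator-valued iteration, mimicking the scalar power-series identity $\sqrt{1-a}=1-\sum_{n\ge1}c_n a^n$ valid for $0\le a\le1$. First I would normalize: replacing $T$ by $T/\|T\|$ (and multiplying the resulting square root back by $\|T\|^{1/2}$), one may assume $0\le T\le I$, so that $A:=I-T$ satisfies $0\le A\le I$ and $\|A\|\le1$. Writing the sought square root as $W=I-B$, the identity $W^2=T=I-A$ becomes the fixed-point equation $B=\tfrac12(A+B^2)$.

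Second, I would analyze the iteration $B_0=0$, $B_{n+1}=\tfrac12(A+B_n^2)$. By induction each $B_n$ is a polynomial in $A$ with nonnegative coefficients and zero constant term; in particular all the $B_n$ are self-adjoint and commute with one another and with $A$, hence with $T$. The structural facts, proved by induction using only $0\le A\le I$, are: (i) $B_n\le I$ for all $n$; and (ii) $B_n\le B_{n+1}$ for all $n$. For (ii) one writes $B_{n+1}-B_n=\tfrac12(B_n^2-B_{n-1}^2)=\tfrac12(B_n+B_{n-1})(B_n-B_{n-1})$ and uses that $B_n+B_{n-1}$ and $B_n-B_{n-1}$ are commuting positive operators (the latter by the inductive hypothesis), so their product is positive. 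I expect this inductive bookkeeping — tracking positivity of products of commuting operators together with the uniform bound — to be the main technical obstacle, since operator inequalities are more delicate than scalar ones and one must avoid appealing to operator monotonicity of $t\mapsto t^2$, which fails.

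Third, since $(B_n)$ is an increasing sequence of self-adjoint operators bounded above by $I$, it converges in the strong operator topology to a self-adjoint operator $B$ with $0\le B\le I$: for each $x$ the scalars $\langle B_nx,x\rangle$ increase and are bounded, hence Cauchy, and the estimate $\|Cx\|^2\le\|C\|\,\langle Cx,x\rangle$ (valid for positive $C$ by Cauchy--Schwarz for the semi-inner product $(u,v)\mapsto\langle Cu,v\rangle$), applied to $C=B_m-B_n\ge0$ with $\|C\|\le1$, shows $(B_nx)$ is Cauchy in $\mathcal H$. Passing to the limit in the recursion gives $B=\tfrac12(A+B^2)$, so $W:=I-B$ satisfies $W^2=T$, and $0\le B\le I$ gives $0\le W\le I$, so $W$ is bounded and positive. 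Since each $B_n$ is a polynomial in $A=I-T$ with zero constant term, $B_n=p_n(T)$ for polynomials $p_n$, and $W=\lim_n(I-p_n(T))$ is a strong limit of polynomials in $T$; in particular $W$ commutes with every operator commuting with $T$. Undoing the normalization produces $T^{1/2}$ in general.

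Finally, for uniqueness suppose $W'$ is another bounded positive operator with $W'^2=T$. Then $W'$ commutes with $T$, hence with $W$. Fix $x$ and set $y=(W-W')x$. Since $W,W'$ commute, $(W-W')(W+W')=W^2-W'^2=0$, so $\langle(W+W')y,y\rangle=\langle(W-W')(W+W')x,y\rangle=0$; positivity of $W$ and $W'$ forces $\langle Wy,y\rangle=\langle W'y,y\rangle=0$, whence $\|Wy\|^2\le\|W\|\langle Wy,y\rangle=0$ and likewise $W'y=0$. Then, using that $W-W'$ is self-adjoint, $\|(W-W')x\|^2=\langle(W-W')x,y\rangle=\langle x,(W-W')y\rangle=0$, so $W=W'$. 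For the invertibility claim, assume $T$ is invertible; then $T^{-1}$ is positive, so $(T^{-1})^{1/2}$ exists, commutes with $T^{1/2}$, and $\big((T^{-1})^{1/2}T^{1/2}\big)^2=T^{-1}T=I$ with $(T^{-1})^{1/2}T^{1/2}\ge0$. By uniqueness of the positive square root of $I$ we get $(T^{-1})^{1/2}T^{1/2}=I$, so $T^{1/2}$ is invertible with inverse $(T^{-1})^{1/2}$.
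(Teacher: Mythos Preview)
The paper does not prove Theorem~\ref{thpo}; it is quoted from Heuser~\cite{HH} as a known background result, with no argument supplied. Your proposal is the classical construction of the positive square root via the Newton-type iteration $B_0=0$, $B_{n+1}=\tfrac12(A+B_n^2)$ applied to $A=I-T$ after normalization, followed by the standard uniqueness and invertibility arguments; this is precisely the approach in Heuser and in most functional-analysis texts, and it is carried out correctly here. In particular, your justification that each $B_n$ is a polynomial in $T$, that the sequence converges strongly, and that the limit commutes with every operator commuting with $T$ supplies exactly the ingredients the present paper later invokes (notably in Proposition~\ref{prosq}) when it says ``$T^{1/2}$ can be expressed as a limit of a sequence of polynomials in $T$.''
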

\begin{rem}\label{remsq}
Since $\|T\|^2=\|TT^*\|$ for $T \in \mathcal{B}(\mathcal{H})$, we have
$\|T^{1/2}\|^2=\|T^{1/2}(T^{1/2})^*\|=\|T^{1/2}T^{1/2}\|=\|T\|$.
Hence, $\|T^{1/2}\|=\|T\|^{1/2}$.
\end{rem}

We end the section with following lemma which gives some algebraic properties  of positive operators.
\begin{lem}\cite{HH}\label{lem1a}
Let $S$, $T \in \mathcal{B}(\mathcal{H})$ be positive and $I$ is the identity operator on $\mathcal{H}$. Then, $I+S$ is an invertible operator on $\mathcal{H}$, $T+S$ is positive. Further, if $TS=ST$, then $TS$ is  positive.
\end{lem}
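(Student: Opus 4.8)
The plan is to treat the three assertions separately, using nothing beyond self-adjointness, the definition of positivity, and the square-root theorem (Theorem \ref{thpo}).

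\textbf{Invertibility of $I+S$.} First I would observe that $I+S$ is self-adjoint, since $S$ is. Then, for every $x\in\mathcal H$, Cauchy--Schwarz gives
\[
\|(I+S)x\|\,\|x\|\ \ge\ \langle (I+S)x,x\rangle\ =\ \|x\|^2+\langle Sx,x\rangle\ \ge\ \|x\|^2,
\]
so $\|(I+S)x\|\ge\|x\|$. Hence $I+S$ is bounded below, therefore injective with closed range. Being self-adjoint, $\overline{\operatorname{range}(I+S)}=\ker(I+S)^{\perp}=\{0\}^{\perp}=\mathcal H$, so the range is all of $\mathcal H$. Thus $I+S$ is a bounded bijection, and the bound-below estimate makes $(I+S)^{-1}$ bounded; by the bounded inverse theorem $I+S$ is invertible.

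\textbf{Positivity of $T+S$.} This is immediate: $T+S$ is self-adjoint as a sum of self-adjoint operators, and $\langle (T+S)x,x\rangle=\langle Tx,x\rangle+\langle Sx,x\rangle\ge 0$ for all $x\in\mathcal H$.

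\textbf{Positivity of $TS$ when $TS=ST$.} First, $TS$ is self-adjoint, since $(TS)^{*}=S^{*}T^{*}=ST=TS$. For positivity I would pass to the positive square root $T^{1/2}$ furnished by Theorem \ref{thpo}. Since $T^{1/2}$ is a limit of polynomials in $T$ and $S$ commutes with $T$, the operator $S$ commutes with each such polynomial, hence with $T^{1/2}$. Using that $T^{1/2}$ is self-adjoint,
\[
\langle TSx,x\rangle\ =\ \langle T^{1/2}T^{1/2}Sx,x\rangle\ =\ \langle S\,T^{1/2}x,\,T^{1/2}x\rangle\ \ge\ 0,
\]
because $S$ is positive; so $TS$ is positive. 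The only point needing care is the commutation $S\,T^{1/2}=T^{1/2}S$, which is exactly where the polynomial-approximation clause of Theorem \ref{thpo} is used; the rest is a direct computation, so I do not expect a serious obstacle here.
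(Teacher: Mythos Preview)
Your proof is correct in all three parts. Note, however, that the paper does not actually supply its own proof of this lemma: it is stated with a citation to Heuser \cite{HH} and used as a black box, so there is no in-paper argument to compare your approach against. Your write-up is a standard and clean justification of the result.
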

\subsection{The space $ L^2(G, \mathbb{C}^{s\times r})$}
Throughout the paper,  $\mathbb{Z}$, $\mathbb{N}$, $\mathbb{R}$  and $\mathbb{C}$ denote the set of integers, positive integers, real numbers and complex numbers, respectively. Symbol $G$ denotes a locally compact abelian (LCA) group which is metrizable and $\sigma$-compact. Let $\mu_{G}$ be the Haar measure on $G$.  Folland \cite{Foll}, Hewitt and  Ross \cite{Hewit} are good references for fundamental properties of locally compact groups.   Matrix-valued functions are denoted by bold letters. For $ s \in \mathbb{N}$, $\textbf{O}_{s\times s}$ and $\textbf{I}_{s\times s}$ denote the zero matrix and the identity matrix of order $s$-by-$s$,  respectively.  $\textbf{O}$ denotes the zero matrix. $M_s(\mathbb{C})$ denotes the space of all complex matrices of order $s$-by-$s$. A diagonal matrix with entries $a_i$ is denoted by diag$(a_1, a_2,\cdots, a_k)$.  As is well known, the class of   Borel functions $f$ on $G$ such that  $\int_{G} |f|^2 d\mu_{G} < \infty$ is denoted by $L^2(G)$. The space  $L^2(G)$ is a  Hilbert space with respect to the standard inner product $\langle f, g\rangle = \int_{G} f \overline{g} d\mu_{G}$.

We define
\begin{align*}
 L^2(G, \mathbb{C}^{s\times r}) := \Big\{\mathbf{f} = \big[f_{i j}\big]_{1 \leq i \leq s \atop 1 \leq j \leq r}
 :  f_{ij} \in  L^2(G)\ (1 \leq i \leq s,1 \leq j \leq r)\Big\},
\end{align*}
where $\big[f_{i j}\big]_{1 \leq i \leq s \atop 1 \leq j \leq r}$ is a matrix of order $s$-by-$r$ with entries $f_{i j}$.  The  Frobenius norm on $L^2(G, \mathbb{C}^{s\times r})$ is given by
 \begin{align}\label{eqnorm1}
\|\mathbf{f}\| = \Big(\sum\limits_{1 \leq i \leq s \atop 1 \leq j \leq r} \int_{G}|f_{ij}|^2 d\mu_{G} \Big)^{\frac{1}{2}}.
\end{align}

The integral of   a function
 $\mathbf{f} = \big[f_{i j}\big]_{1 \leq i \leq s \atop 1 \leq j \leq r} \in L^2(G, \mathbb{C}^{s\times r})$  is defined as $\int_{G}\mathbf{f}d\mu_{G} = \left[\int_{G}f_{i j}d\mu_{G}\right]_{1 \leq i \leq s \atop 1 \leq j \leq r}$.

For $\mathbf{f}$,  $\mathbf{g} \in L^2(G, \mathbb{C}^{s\times r})$, the matrix-valued inner product is defined as
 \begin{align}\label{dminp}
\langle \mathbf{f},  \mathbf{g}\rangle = \int_{G}\mathbf{f}(x) \mathbf{g}^*(x) d\mu_{G}.
\end{align}
Here, $\mathbf{g}^*$ denotes the conjugate transpose of $\mathbf{g}$. One may observe that the matrix-valued inner product defined  in  \eqref{dminp}  is not an  inner product in usual sense, but still satisfies the following properties:
\begin{enumerate}[$(i)$]
  \item $\langle \mathbf{A} \mathbf{f} + \mathbf{B} \mathbf{g},  \mathbf{h}\rangle = \mathbf{A} \langle \mathbf{f},  \mathbf{h}\rangle + \mathbf{B}  \langle \mathbf{g},  \mathbf{h}\rangle $ for all  $\mathbf{f}$,  $\mathbf{g}$,
   $\mathbf{h} \in L^2(G, \mathbb{C}^{s\times r})$ and for all $\mathbf{A}$, $ \mathbf{B} \in M_s(\mathbb{C})$.

\item $\langle \mathbf{f},  \mathbf{g}\rangle =\langle \mathbf{g},  \mathbf{f}\rangle^{*}$ for all  $\mathbf{f},  \mathbf{g} \in L^2(G, \mathbb{C}^{s\times r})$.
\end{enumerate}
Let  tr$A$ denote trace of the matrix $A$.  The scalar-valued  function  $\langle \cdot,\cdot \rangle_{L^2}$ defined by
 \begin{align}\label{indenIIhcI}
  \langle \mathbf{f},\mathbf{g}\rangle_{L^2} = \text{tr}\langle \mathbf{f},\mathbf{g}\rangle, \  \ \mathbf{f},  \  \mathbf{g} \in L^2(G, \mathbb{C}^{s\times r})
   \end{align}
 is an inner product, in the usual sense, on  $L^2(G, \mathbb{C}^{s\times r})$. Further,  $L^2(G, \mathbb{C}^{s\times r})$ becomes a Hilbert space with respect to the inner product given in \eqref{indenIIhcI}. Furthermore, the inner product defined in \eqref{indenIIhcI} induces the  Frobenius norm: $\|\mathbf{f}\|  = \sqrt{\langle \mathbf{f},\mathbf{f} \rangle_{L^2}}, \ \mathbf{f} \in L^2(G, \mathbb{C}^{s\times r})$. Note that $G$ is metrizable and $\sigma$-compact is equivalent to $L^2(G)$ is separable, hence $L^2(G,\mathbb{C}^{s\times r})$ is separable.\\

 \textbf{Adjointable operators on $L^2(G, \mathbb{C}^{s\times r})$}: We say that a bounded linear operator $U$ acting on $L^2(G, \mathbb{C}^{s\times r})$ is  adjointable with respect to the matrix-valued inner product, defined in \eqref{dminp}, if
  $\langle U\textbf{f}, \textbf{g}\rangle=\langle \textbf{f}, U^*\textbf{g}\rangle$ for all \textbf{f}, $\textbf{g} \in L^2(G, \mathbb{C}^{s\times r})$, where  $U^*$ is the Hilbert-adjoint operator of $U$.
  It is noteworthy that a bounded linear operator on  $L^2(G, \mathbb{C}^{s\times r})$ may not be adjointable with respect to the matrix-valued inner product.  This is justified in the following example:

\begin{exa}\label{remb}
Let $G= (\mathbb{R}, +)$ be the additive group of real numbers and $\mu_{G}$ denote the Lebesgue measure on the $\sigma$-algebra of Lebesgue measurable sets of $G$. Consider a bounded, linear and bijective operator $U: L^2(G, \mathbb{C}^{2\times 2}) \rightarrow  L^2(G, \mathbb{C}^{2\times 2})$  given by
\begin{align*}
U: \textbf{f} \mapsto
 \begin{bmatrix}
  f_{12}&f_{11}\\
  f_{21}&f_{22}
  \end{bmatrix}, \ \textbf{f} = \big[f_{ij}\big]_{1 \leq i, j \leq2} \in L^2(G, \mathbb{C}^{2\times 2}).
\end{align*}
Then, $U$ is not adjointable with respect to the matrix-valued inner product on $L^2(G, \mathbb{C}^{2\times 2})$. In fact, for all $ \textbf{f}$,  $\textbf{g} \in L^2(G, \mathbb{C}^{2\times 2})$, we have
\begin{align*}
 \text{tr} \langle U \textbf{f}, \textbf{g}\rangle
= \text{tr}  \int_{G} \begin{bmatrix}
 f_{12}&f_{11}\\
  f_{21}&f_{22}
  \end{bmatrix}\begin{bmatrix}
 \overline{g_{11}} & \overline{g_{21}}\\
  \overline{g_{12}} & \overline{g_{22}}
  \end{bmatrix} d\mu_{G}
=  \text{tr}  \int_{G} \begin{bmatrix}
 f_{11} &f_{12}\\
  f_{21}&f_{22}
  \end{bmatrix}\begin{bmatrix}
\overline{g_{12}}&\overline{g_{21}}\\
\overline{g_{11}}&\overline{g_{22}}
  \end{bmatrix} d\mu_{G}
= \text{tr} \langle \textbf{f}, U \textbf{g}\rangle.
\end{align*}
This implies that $U^*=U$. However, the equality $ \langle U \textbf{f}, \textbf{g}\rangle =  \langle \textbf{f},U^* \textbf{g}\rangle$ does not hold for all  $\textbf{f}$, $\textbf{g} \in L^2(G, \mathbb{C}^{2\times 2})$. Hence, $U$ is not adjointable with respect to the matrix-valued inner product.
\end{exa}
We give the following lemma,  which  is based on the adjointability of bounded linear operators on $L^2(G, \mathbb{C}^{s\times r})$ with respect to the matrix-valued inner product. A simple computation gives its proof.
\begin{lem}\label{lemb}
Let $U,  V \in L^2(G, \mathbb{C}^{s\times r})$ be bounded linear operators that are  adjointable with respect to the matrix-valued inner product. Then
\begin{enumerate}[$(1)$]
\item $U^{-1}$ is adjointable with respect to the  matrix-valued inner product provided $U^{-1}$ exists.
\item $U+V$ is adjointable with respect to the matrix-valued inner product.
\item $UV$ is adjointable with respect to the matrix-valued inner product.
\end{enumerate}
\end{lem}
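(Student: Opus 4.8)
The plan is to deduce all three statements from the corresponding standard facts about Hilbert-adjoint operators, combined with the algebraic properties $(i)$ and $(ii)$ of the matrix-valued inner product recorded above. The starting point I would emphasize is that, since $L^2(G,\mathbb{C}^{s\times r})$ is a Hilbert space under $\langle\cdot,\cdot\rangle_{L^2}$, the Hilbert-adjoint $U^*$ of a bounded operator $U$ always exists; adjointability of $U$ with respect to the matrix-valued inner product is exactly the extra assertion that the \emph{same} operator $U^*$ also satisfies $\langle U\mathbf{f},\mathbf{g}\rangle=\langle\mathbf{f},U^*\mathbf{g}\rangle$ for the $M_s(\mathbb{C})$-valued pairing. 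So in each item it suffices to check that the obvious candidate Hilbert-adjoint works for the matrix-valued pairing as well.

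For $(2)$ I would use additivity of $\langle\cdot,\cdot\rangle$ in each slot — additivity in the first slot is the case $\mathbf{A}=\mathbf{B}=\mathbf{I}_{s\times s}$ of $(i)$, and additivity in the second slot follows by applying $(ii)$, using $(i)$, and applying $(ii)$ again — to write
\begin{align*}
\langle (U+V)\mathbf{f},\mathbf{g}\rangle=\langle U\mathbf{f},\mathbf{g}\rangle+\langle V\mathbf{f},\mathbf{g}\rangle=\langle\mathbf{f},U^*\mathbf{g}\rangle+\langle\mathbf{f},V^*\mathbf{g}\rangle=\langle\mathbf{f},(U^*+V^*)\mathbf{g}\rangle,
\end{align*}
and then invoke $(U+V)^*=U^*+V^*$. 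For $(3)$ the same bookkeeping gives
\begin{align*}
\langle UV\mathbf{f},\mathbf{g}\rangle=\langle V\mathbf{f},U^*\mathbf{g}\rangle=\langle\mathbf{f},V^*U^*\mathbf{g}\rangle=\langle\mathbf{f},(UV)^*\mathbf{g}\rangle,
\end{align*}
using $(UV)^*=V^*U^*$. For $(1)$, when $U^{-1}$ exists it is bounded by the open mapping theorem and $(U^*)^{-1}=(U^{-1})^*$; substituting $\mathbf{f}\mapsto U^{-1}\mathbf{f}$ and $\mathbf{g}\mapsto (U^*)^{-1}\mathbf{g}$ in $\langle U\mathbf{f},\mathbf{g}\rangle=\langle\mathbf{f},U^*\mathbf{g}\rangle$ collapses the left side to $\langle\mathbf{f},(U^{-1})^*\mathbf{g}\rangle$ and the right side to $\langle U^{-1}\mathbf{f},\mathbf{g}\rangle$, which is precisely adjointability of $U^{-1}$ with respect to the matrix-valued inner product.

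I do not expect any genuine obstacle; the paper itself calls the proof a simple computation. The one point I would be careful about is not to take the second-variable additivity of the matrix-valued inner product for granted: it is not literally one of the listed properties and must be derived from $(i)$ and $(ii)$, because the pairing takes values in $M_s(\mathbb{C})$ and is genuinely matrix-linear only in the first variable. Everything else is a direct transcription of the scalar Hilbert-space identities for adjoints of sums, products, and inverses.
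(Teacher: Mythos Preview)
Your argument is correct and is exactly the ``simple computation'' the paper alludes to; the paper does not spell out a proof of Lemma~\ref{lemb} at all, so there is nothing further to compare. Your care in deriving second-slot additivity of the matrix-valued pairing from properties $(i)$ and $(ii)$ is appropriate and the substitutions for part $(1)$ are clean.
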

The following lemma plays a key role in the characterization of positive operators on $L^2(G, \mathbb{C}^{s\times r})$.
\begin{lem}\cite{Jyoti6}\label{lem1}
Let $T$ be a linear operator acting on $L^2(G, \mathbb{C}^{s\times r})$ such that $\langle T \textbf{f}, \textbf{g}\rangle=\langle \textbf{f}, T^*\textbf{g}\rangle$ for all $\textbf{f}, \, \textbf{g} \in L^2(G, \mathbb{C}^{s\times r})$. Then, $T(M\textbf{f})=M T(\textbf{f})$ for all $M \in \mathcal{M}_s(\mathbb{C})$ and for all  $\textbf{f} \in L^2(G, \mathbb{C}^{s\times r})$.
\end{lem}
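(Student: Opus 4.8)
The plan is to read off the module identity $T(M\mathbf{f})=M\,T(\mathbf{f})$ directly from the adjointability hypothesis, using only the behaviour of the matrix-valued inner product under left multiplication by constant matrices together with the fact that $\langle\cdot,\cdot\rangle$ is faithful.

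First I would isolate two elementary facts about $\langle\cdot,\cdot\rangle$. From the first listed property of $\langle\cdot,\cdot\rangle$ following \eqref{dminp} (take $\mathbf{B}=\mathbf{O}$ there), one has $\langle M\mathbf{h},\mathbf{k}\rangle=M\langle\mathbf{h},\mathbf{k}\rangle$ for all $M\in M_s(\mathbb{C})$ and all $\mathbf{h},\mathbf{k}\in L^2(G,\mathbb{C}^{s\times r})$; this substitution is legitimate because $M\mathbf{h}$ is again an $s\times r$ matrix-valued $L^2$-function, hence an element of $L^2(G,\mathbb{C}^{s\times r})$. Secondly, $\langle\cdot,\cdot\rangle$ is faithful: if $\mathbf{h}\in L^2(G,\mathbb{C}^{s\times r})$ satisfies $\langle\mathbf{h},\mathbf{k}\rangle=\mathbf{O}$ for every $\mathbf{k}$, then in particular $\langle\mathbf{h},\mathbf{h}\rangle=\mathbf{O}$, so applying the trace and using \eqref{indenIIhcI} gives $\|\mathbf{h}\|^{2}=\text{tr}\langle\mathbf{h},\mathbf{h}\rangle=0$, whence $\mathbf{h}=\mathbf{0}$.

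Now fix $M\in M_s(\mathbb{C})$ and $\mathbf{f}\in L^2(G,\mathbb{C}^{s\times r})$, and let $\mathbf{g}\in L^2(G,\mathbb{C}^{s\times r})$ be arbitrary. Applying the adjointability hypothesis to the pair $(M\mathbf{f},\mathbf{g})$, then pulling $M$ out on the left, then applying the adjointability hypothesis to the pair $(\mathbf{f},\mathbf{g})$, and finally pulling $M$ back in, I would obtain
\[
\langle T(M\mathbf{f}),\mathbf{g}\rangle=\langle M\mathbf{f},T^{*}\mathbf{g}\rangle=M\langle\mathbf{f},T^{*}\mathbf{g}\rangle=M\langle T\mathbf{f},\mathbf{g}\rangle=\langle M\,(T\mathbf{f}),\mathbf{g}\rangle .
\]
By the additivity of $\langle\cdot,\cdot\rangle$ in its first argument (again a special case of the first listed property), this rearranges to $\langle\,T(M\mathbf{f})-M\,T(\mathbf{f}),\,\mathbf{g}\,\rangle=\mathbf{O}$ for every $\mathbf{g}\in L^2(G,\mathbb{C}^{s\times r})$, and the faithfulness established above forces $T(M\mathbf{f})=M\,T(\mathbf{f})$, which is the assertion.

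I do not expect a serious obstacle here; linearity of $T$ is not even used in the argument above. The two points that deserve a line of justification are the legitimacy of substituting $M\mathbf{f}$ into the adjointability identity — which rests on $L^2(G,\mathbb{C}^{s\times r})$ being a left $M_s(\mathbb{C})$-module — and the faithfulness of $\langle\cdot,\cdot\rangle$, which holds even though $\langle\cdot,\cdot\rangle$ is not an inner product in the classical sense, precisely because its trace recovers the scalar inner product \eqref{indenIIhcI} inducing the Frobenius norm \eqref{eqnorm1}. I expect the proof in \cite{Jyoti6} to proceed along these lines.
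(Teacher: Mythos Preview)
Your argument is correct: the chain $\langle T(M\mathbf{f}),\mathbf{g}\rangle=\langle M\mathbf{f},T^{*}\mathbf{g}\rangle=M\langle\mathbf{f},T^{*}\mathbf{g}\rangle=M\langle T\mathbf{f},\mathbf{g}\rangle=\langle M\,T\mathbf{f},\mathbf{g}\rangle$ together with faithfulness of $\langle\cdot,\cdot\rangle$ via $\mathrm{tr}\langle\mathbf{h},\mathbf{h}\rangle=\|\mathbf{h}\|^{2}$ gives the claim cleanly. Note, however, that the paper itself does not supply a proof of this lemma --- it is quoted from \cite{Jyoti6} --- so there is no in-paper argument to compare against; your approach is the natural one and almost certainly what the cited reference does.
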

The  authors have recently investigated  matrix-valued Riesz  bases and frames for  the space  $L^2(G, \mathbb{C}^{s\times r})$ in \cite{JV9}. Necessary and sufficient conditions for the existence of matrix-valued Riesz bases in the space $L^2(G, \mathbb{C}^{s\times r})$ can be found in   \cite{JV9}. Frame conditions in terms of matrix-valued Riesz bases are also discussed in \cite{JV9}.  First, we recall the definition of a matrix-valued orthonormal basis of the space $L^2(G, \mathbb{C}^{s\times r})$.
\begin{defn}\cite{Jyoti6}\label{don}
A \emph{matrix-valued orthonormal basis} for $ L^2(G, \mathbb{C}^{s\times r})$ is a countable family of matrix-valued functions $\{E_k\}_{k \in \mathbb{N}} \subset L^2(G, \mathbb{C}^{s\times r}) $ satisfying the following two properties:
\begin{enumerate}
\item $\langle E_k, E_j \rangle=\begin{cases}
\textbf{I}_{s\times s} , \  k=j\\
\textbf{O}_{s\times s} , \ k \neq j
\end{cases}, \ \text{for all} \  j, \, k \in \mathbb{N}.$
\item Every $\textbf{f} \in L^2(G, \mathbb{C}^{s\times r})$ can be written as $\textbf{f}=\sum\limits_{k \in \mathbb{N}}\langle \textbf{f}, E_k \rangle E_k$, where the series  converges in the  Frobenius norm given in \eqref{eqnorm1}.
\end{enumerate}
\end{defn}
Now, we give the definition of a matrix-valued Riesz basis of the space $L^2(G, \mathbb{C}^{s\times r})$.
\begin{defn}\cite{JV9} \label{def3.2x}
Let $\{E_k\}_{k \in \mathbb{N}}$ be  a matrix-valued orthonormal basis of $L^2(G, \mathbb{C}^{s\times r})$ and let $U$ be a bounded, linear, and bijective operator acting on  $L^2(G, \mathbb{C}^{s\times r})$ that  is adjointable with respect to the matrix-valued inner product, that is,
$\langle U\textbf{f}, \textbf{g}\rangle=\langle \textbf{f}, U^*\textbf{g}\rangle$ for all $\textbf{f}$, $\textbf{g} \in L^2(G, \mathbb{C}^{s\times r})$. A collection of the form $\{U E_k\}_{k \in \mathbb{N}}$  is called a \emph{matrix-valued Riesz basis} for $L^2(G, \mathbb{C}^{s\times r})$.
\end{defn}

\begin{rem}
Compared to the conventional definition of Riesz basis in a separable Hilbert space, Definition \ref{def3.2x} adds the condition $\langle U \textbf{f}, \textbf{g}\rangle=\langle \textbf{f}, U^*\textbf{g}\rangle$ for all $\textbf{f}$, $\textbf{g} \in L^2(G, \mathbb{C}^{s\times r})$. The proof that a matrix-valued Riesz basis for $L^2(G, \mathbb{C}^{s\times r})$ is a frame for $L^2(G, \mathbb{C}^{s\times r})$ depends on this condition, which makes it crucial. For justification, see Example \ref{Hdaki}. We recall that a collection of functions $\{\textbf{F}_k\}_{k \in \mathbb{N}} \subset L^2(G, \mathbb{C}^{s\times r})$ is called a  \emph{matrix-valued frame} for $L^2(G, \mathbb{C}^{s\times r})$ if
\begin{align*}
a_o\|\textbf{f}\|^2 \le \sum_{k\in \mathbb{N}}\|\langle \textbf{f},\textbf{F}_k \rangle\|^2 \le b_o\|\textbf{f}\|^2, \  \textbf{f} \in L^2(G, \mathbb{C}^{s\times r}),
\end{align*}
holds for some  $a_o$, $b_o \in (0, \infty)$. The scalars $a_o$ and $b_o$ are called the \emph{lower frame bound} and \emph{upper frame bound}, respectively, of  $\{\textbf{F}_k\}_{k \in \mathbb{N}}$. The  frame $\{\textbf{F}_k\}_{k \in \mathbb{N}}$ is said to be \emph{tight} if $a_o = b_o$ and \emph{Parseval} if $a_o = b_o =1$.
\end{rem}
The following example shows that unlike the case of standard orthonormal bases and frames the image of a matrix-valued orthonormal basis of  $L^2(G, \mathbb{C}^{s\times r})$ under a bijective, bounded,  and linear operator acting on  $L^2(G, \mathbb{C}^{s\times r})$ may not constitute a frame for  $L^2(G, \mathbb{C}^{s\times r})$.
\begin{exa}\label{Hdaki}
Consider the LCA group $G$ and  bijective, bounded, and  linear operator $U$ given in Example \ref{remb} which is not adjointable with respect to the matrix-valued inner product. Let $\{e_k\}_{k \in \mathbb{N}}$ be a given orthonormal basis  of $L^2(G)$ and $E_k= \text{diag}(e_k, e_k)$, $k \in \mathbb{N}$. Then, $\{E_k\}_{k \in \mathbb{N}}$ is a matrix-valued orthonormal basis for $L^2(G, \mathbb{C}^{2\times 2})$. But, $\{UE_k\}_{k \in \mathbb{N}}$ is not a matrix-valued Riesz basis for $L^2(G, \mathbb{C}^{2\times 2})$. In fact, $\{UE_k\}_{k \in \mathbb{N}}$ is not even a frame for $L^2(G, \mathbb{C}^{2\times 2})$. Indeed, consider a non-zero function $f \in L^2(G) $.
Then,   $ \textbf{f}=\begin{bmatrix}
0 &0\\
f & 0
\end{bmatrix}$ is a non-zero matrix-valued function in $L^2(G,\mathbb{C}^{2\times 2})$ such that
\begin{align*}
\sum_{k \in \mathbb{N}}\Big\|\Big\langle \textbf{f}, U E_k \Big\rangle \Big\|^2
=\sum_{k\in \mathbb{N}}\Big\|\int\limits_{G} \begin{bmatrix}
0 & 0\\
f & 0
\end{bmatrix} \begin{bmatrix}
0 & 0\\
\overline{e}_k & \overline{e}_k
\end{bmatrix} dm_{G} \Big\|^2
=0.
\end{align*}
Thus,  $\{UE_k\}_{k \in \mathbb{N}}$ is not complete in $L^2(G, \mathbb{C}^{2\times 2})$, hence not a frame for $L^2(G, \mathbb{C}^{2\times 2})$.
\end{exa}
Example \ref{Hdaki} also shows that the  image of a matrix-valued orthonormal basis of $L^2(G, \mathbb{C}^{s\times r})$ under a bounded, bijective, and  linear operator acting on $L^2(G, \mathbb{C}^{s\times r})$ may not be a matrix-valued Riesz basis.
For  various types of matrix-valued frames and  bases in matrix-valued function spaces, we refer to  recent papers by the authors \cite{JVK00, JV1, JV9}.

\section{Main Results}\label{secIII}
We begin by describing  classes of operators on $L^2(G, \mathbb{C}^{s\times r})$ that give matrix-valued Riesz bases from orthonormal bases of the space $L^2(G, \mathbb{C}^{s\times r})$. Firstly, we see that if $\{UE_k\}_{k \in \mathbb{N}}$ is a matrix-valued Riesz basis, then $\{U^*UE_k\}_{k \in \mathbb{N}}$ is also a matrix-valued Riesz basis for $L^2(G, \mathbb{C}^{s\times r})$. The matrix-valued adjointability of $U^*U$ follows from Lemma \ref{lemb} and the  bijectivity of $U^*U$ is  obvious. Next, we observe that the sum of two matrix-valued Riesz bases need not be a matrix-valued Riesz basis since the sum of two bijective operators need not be bijective. However, if $\{(I+U^*U)E_k\}_{k \in \mathbb{N}}$ is considered, then this sum becomes a matrix-valued Riesz basis. The matrix-valued adjointability of $I+U^*U$ follows from Lemma \ref{lemb} and the bijectivity of $I+U^*U$ follows from Lemma \ref{lem1a} since
$U^*U$ is a positive operator. In general, we have the following result which is a direct consequence of Lemma \ref{lem1a} and Lemma \ref{lemb}.
\begin{thm}\label{thpi}
Let $T,  S$ be bounded, linear, and  positive operators on $ L^2(G, \mathbb{C}^{s\times r})$ which are adjointable with respect to matrix-valued inner product.  Let $\{E_k\}_{k \in \mathbb{N}}$ be a matrix-valued orthonormal basis for $L^2(G, \mathbb{C}^{s\times r})$. Then,
\begin{enumerate}[$(1)$]
\item $\big\{(I+T)E_k\big\}_{k \in \mathbb{N}}$ and $\big\{(I+S)E_k\big\}_{k \in \mathbb{N}}$ are matrix-valued Riesz bases for $L^2(G, \mathbb{C}^{s\times r})$.
\item $\big\{(I+T+S)E_k\big\}_{k \in \mathbb{N}}$ is a matrix-valued Riesz basis for $L^2(G, \mathbb{C}^{s\times r})$.
\item $\big\{(I+TS)E_k\big\}_{k \in \mathbb{N}}$ is a matrix-valued Riesz basis for $L^2(G, \mathbb{C}^{s\times r})$ if $TS=ST$.
\end{enumerate}
\end{thm}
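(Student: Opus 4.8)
The plan is to verify, for each of the operators $I+T$, $I+S$, $I+T+S$, and (when $TS=ST$) $I+TS$, the three requirements in Definition \ref{def3.2x}: that it is bounded and linear, that it is bijective, and that it is adjointable with respect to the matrix-valued inner product. Once these are checked, Definition \ref{def3.2x} immediately yields that the image of $\{E_k\}_{k\in\mathbb{N}}$ under each such operator is a matrix-valued Riesz basis for $L^2(G,\mathbb{C}^{s\times r})$. Boundedness and linearity are automatic, since $I$, $T$, $S\in\mathcal{B}(L^2(G,\mathbb{C}^{s\times r}))$ and sums and composites of bounded linear operators are again bounded and linear; so the real content is bijectivity together with matrix-valued adjointability.

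For adjointability I would first note that the identity operator is adjointable with respect to the matrix-valued inner product, with $I^{*}=I$, because $\langle I\mathbf{f},\mathbf{g}\rangle=\langle\mathbf{f},\mathbf{g}\rangle=\langle\mathbf{f},I\mathbf{g}\rangle$ for all $\mathbf{f},\mathbf{g}$. By hypothesis $T$ and $S$ are adjointable with respect to the matrix-valued inner product of \eqref{dminp}, so Lemma \ref{lemb}(3) gives that $TS$ is as well, and then repeated application of Lemma \ref{lemb}(2) shows that $I+T$, $I+S$, $I+T+S$, and $I+TS$ are each adjointable with respect to the matrix-valued inner product.

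For bijectivity I would invoke Lemma \ref{lem1a}, which concerns positivity and invertibility in the Hilbert space $\big(L^2(G,\mathbb{C}^{s\times r}),\langle\cdot,\cdot\rangle_{L^2}\big)$. Since $T$ and $S$ are positive, Lemma \ref{lem1a} gives directly that $I+T$ and $I+S$ are invertible, which proves (1). For (2), Lemma \ref{lem1a} first yields that $T+S$ is positive, and hence $I+(T+S)$ is invertible. For (3), when $TS=ST$ Lemma \ref{lem1a} yields that $TS$ is positive, and hence $I+TS$ is invertible. Combining these invertibility facts with the adjointability established above, each of the four operators is bounded, linear, bijective, and adjointable with respect to the matrix-valued inner product, so by Definition \ref{def3.2x} each of $\{(I+T)E_k\}_{k\in\mathbb{N}}$, $\{(I+S)E_k\}_{k\in\mathbb{N}}$, $\{(I+T+S)E_k\}_{k\in\mathbb{N}}$, and $\{(I+TS)E_k\}_{k\in\mathbb{N}}$ is a matrix-valued Riesz basis for $L^2(G,\mathbb{C}^{s\times r})$.

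The proof is short and essentially bookkeeping; the one point that deserves care is keeping the two notions apart. Positivity and invertibility are used relative to the scalar inner product $\langle\cdot,\cdot\rangle_{L^2}$, so Lemma \ref{lem1a} applies there, while adjointability is meant relative to the matrix-valued inner product of \eqref{dminp}, which is a genuinely weaker-looking but distinct condition (cf.\ Example \ref{remb}), so one must route that part of the argument through Lemma \ref{lemb} rather than the ordinary Hilbert adjoint. Beyond that, no step is a real obstacle.
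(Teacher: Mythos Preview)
Your argument is correct and follows exactly the route the paper itself takes: the theorem is stated there as ``a direct consequence of Lemma \ref{lem1a} and Lemma \ref{lemb},'' and your write-up simply spells out how those two lemmas supply, respectively, the bijectivity (via positivity of $T$, $S$, $T+S$, and $TS$) and the matrix-valued adjointability needed in Definition \ref{def3.2x}. Your added remark distinguishing the scalar inner product (where positivity and Lemma \ref{lem1a} live) from the matrix-valued inner product (where adjointability and Lemma \ref{lemb} live) is a useful clarification that the paper leaves implicit.
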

\begin{cor}\label{cose}
Let $T$ be a bounded, linear, and  positive operator acting  on $ L^2(G, \mathbb{C}^{s\times r})$ which is adjointable with respect to matrix-valued inner product and $\{E_k\}_{k \in \mathbb{N}}$ be a matrix-valued orthonormal basis for $L^2(G, \mathbb{C}^{s\times r})$. Then, $\big\{(I+T+T^2+\cdots +T^n)E_k\big\}_{k \in \mathbb{N}}$ is a matrix-valued Riesz basis for $L^2(G, \mathbb{C}^{s\times r})$ for every $n \in \mathbb{N}$.
\end{cor}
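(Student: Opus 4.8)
The plan is to reduce the statement to part (1) of Theorem \ref{thpi} by absorbing the whole tail $T + T^2 + \cdots + T^n$ into a single operator. Concretely, I would set $S := T + T^2 + \cdots + T^n$ and show that $S$ is a bounded, linear, positive operator on $L^2(G, \mathbb{C}^{s\times r})$ which is adjointable with respect to the matrix-valued inner product. Once this is in place, Theorem \ref{thpi}(1) applied to $S$ yields that $\big\{(I+S)E_k\big\}_{k \in \mathbb{N}} = \big\{(I + T + T^2 + \cdots + T^n)E_k\big\}_{k \in \mathbb{N}}$ is a matrix-valued Riesz basis for $L^2(G, \mathbb{C}^{s\times r})$, which is exactly the claim (for each fixed $n \in \mathbb{N}$).

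The first step is to check, by induction on $j$, that $T^j$ is positive and adjointable with respect to the matrix-valued inner product for every $1 \le j \le n$. The base case $j=1$ is the hypothesis on $T$. For the inductive step, write $T^j = T \cdot T^{j-1}$: the operators $T$ and $T^{j-1}$ are both positive by the inductive hypothesis and trivially commute, so Lemma \ref{lem1a} gives that $T^j$ is positive, while Lemma \ref{lemb}(3) gives that $T^j$ is adjointable with respect to the matrix-valued inner product. Boundedness is automatic, since $\|T^j\| \le \|T\|^j < \infty$. Summing these finitely many operators, $S = \sum_{j=1}^{n} T^j$ is then positive by repeated use of Lemma \ref{lem1a} (a finite sum of positive operators is positive), adjointable with respect to the matrix-valued inner product by Lemma \ref{lemb}(2), and bounded. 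This is precisely the input needed for Theorem \ref{thpi}(1).

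I do not expect a serious obstacle here; the argument is essentially bookkeeping with Lemma \ref{lem1a} and Lemma \ref{lemb}. The one point that requires a little care is to carry along \emph{both} positivity and commutativity through the induction on $j$: if one only tracked positivity, the hypothesis $TS = ST$ of Lemma \ref{lem1a} would be unavailable when passing from $T^{j-1}$ to $T^j$. It is also worth noting why a direct induction on $n$ via Theorem \ref{thpi} is not the natural route: one would want to combine $\{(I + T + \cdots + T^{n-1})E_k\}$ with a term involving $T^n$, but $(I + T + \cdots + T^{n-1}) + (I + T^n) = 2I + T + \cdots + T^n$ is not of the required form, so folding the entire tail into the single positive operator $S$ is the clean way to proceed.
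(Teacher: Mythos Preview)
Your proposal is correct and matches the paper's intent: the corollary is stated without proof immediately after Theorem~\ref{thpi}, so it is meant to follow exactly as you describe---fold $S = T + T^2 + \cdots + T^n$ into a single positive, matrix-adjointable operator via Lemma~\ref{lem1a} and Lemma~\ref{lemb}, then invoke Theorem~\ref{thpi}(1). Your inductive verification that each $T^j$ is positive and adjointable is the right bookkeeping, and your remark about tracking commutativity alongside positivity is well-placed.
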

Although $\big\{(I+T+T^2+\cdots +T^n)E_k\big\}_{k \in \mathbb{N}}$ is a matrix-valued Riesz basis for $L^2(G, \mathbb{C}^{s\times r})$ for every $n \in \mathbb{N}$ in Corollary \ref{cose}, the sequence $\big\{(I+T+T^2+\cdots )E_k\big\}_{k \in \mathbb{N}}$ need not be a matrix-valued Riesz basis for $L^2(G, \mathbb{C}^{s\times r})$. In the following result, we will show that $\big\{(I+T+T^2+\cdots )E_k\big\}_{k \in \mathbb{N}}$ becomes a matrix-valued Riesz basis under suitable conditions on the operator $T$. As a side note, the positivity of the operator $T$ is not required to obtain the result.
\begin{thm}\label{thm3.3I}
Let $T$ be a bounded linear operator on  $L^2(G, \mathbb{C}^{s\times r})$ that is adjointable with respect to matrix-valued inner product. If $\|T\| <1$, then $\big\{(I+T+T^2+\cdots )E_k\big\}_{k \in \mathbb{N}}$ is a matrix-valued Riesz basis for $L^2(G, \mathbb{C}^{s\times r})$ where $\{E_k\}_{k \in \mathbb{N}}$ is a matrix-valued orthonormal basis for $L^2(G, \mathbb{C}^{s\times r})$.
\end{thm}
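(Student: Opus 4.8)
The plan is to show that the operator $V := I + T + T^2 + \cdots = \sum_{n=0}^{\infty} T^n$ is well-defined, bounded, linear, bijective, and adjointable with respect to the matrix-valued inner product; once this is established, $\{VE_k\}_{k\in\mathbb{N}}$ is a matrix-valued Riesz basis for $L^2(G,\mathbb{C}^{s\times r})$ directly from Definition \ref{def3.2x}. First I would observe that since $\|T\| < 1$, the Neumann series $\sum_{n=0}^{\infty} T^n$ converges in operator norm in $\mathcal{B}(L^2(G,\mathbb{C}^{s\times r}))$ (the partial sums form a Cauchy sequence because $\sum_n \|T^n\| \le \sum_n \|T\|^n < \infty$), so $V$ is a well-defined bounded linear operator. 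The standard Neumann series identity gives $(I-T)V = V(I-T) = I$, so $V = (I-T)^{-1}$ is bijective with bounded inverse $I - T$.

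Next I would verify adjointability of $V$ with respect to the matrix-valued inner product. Since $T$ is adjointable, Lemma \ref{lemb}(3) gives that $T^n$ is adjointable for every $n$ (by induction), and Lemma \ref{lemb}(2) gives that each partial sum $S_N := \sum_{n=0}^{N} T^n$ is adjointable, with $(S_N)^* = \sum_{n=0}^{N} (T^*)^n$. To pass to the limit, I would use that $S_N \to V$ in operator norm and that norm convergence is compatible with the adjoint: for fixed $\mathbf{f}, \mathbf{g}$, we have $\langle S_N \mathbf{f}, \mathbf{g}\rangle = \langle \mathbf{f}, (S_N)^* \mathbf{g}\rangle$ for all $N$, and since $S_N\mathbf{f} \to V\mathbf{f}$ and $(S_N)^*\mathbf{g} = \sum_{n=0}^N (T^*)^n \mathbf{g} \to (I - T^*)^{-1}\mathbf{g}$ in the Frobenius norm (the latter because $\|T^*\| = \|T\| < 1$, using Remark \ref{remsq}-type reasoning or simply $\|T^*\|=\|T\|$), continuity of the matrix-valued inner product in each argument (which follows from Cauchy--Schwarz for the Frobenius norm, since $\|\langle \mathbf{h}, \mathbf{k}\rangle\| \le \|\mathbf{h}\|\,\|\mathbf{k}\|$) lets me take limits on both sides to conclude $\langle V\mathbf{f}, \mathbf{g}\rangle = \langle \mathbf{f}, V^*\mathbf{g}\rangle$ with $V^* = (I-T^*)^{-1}$. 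Hence $V$ is adjointable with respect to the matrix-valued inner product.

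Finally, with $V$ bounded, linear, bijective, and matrix-valued adjointable, and $\{E_k\}_{k\in\mathbb{N}}$ a matrix-valued orthonormal basis, Definition \ref{def3.2x} immediately yields that $\{VE_k\}_{k\in\mathbb{N}} = \{(I+T+T^2+\cdots)E_k\}_{k\in\mathbb{N}}$ is a matrix-valued Riesz basis for $L^2(G,\mathbb{C}^{s\times r})$.

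The routine part is the Neumann-series convergence and invertibility; the one genuinely delicate point is the adjointability transfer in the limit, i.e. justifying that one may pass from the adjointability of the partial sums $S_N$ to adjointability of $V$. The main obstacle is ensuring the interchange of limit and matrix-valued inner product is legitimate — this rests on the continuity estimate $\|\langle \mathbf{h},\mathbf{k}\rangle\| \le \|\mathbf{h}\|\|\mathbf{k}\|$ and on the norm convergence of both $S_N\mathbf{f}$ and $(S_N)^*\mathbf{g}$, the latter requiring the observation $\|T^*\| = \|T\| < 1$. Everything else follows formally from the definitions and the lemmas already recorded in Section \ref{secII}.
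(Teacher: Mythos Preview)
Your argument is correct and follows the same overall strategy as the paper: identify $V=\sum_{n\ge 0}T^n$ with $(I-T)^{-1}$ via the Neumann series, check that $V$ is bounded, linear, bijective, and adjointable with respect to the matrix-valued inner product, and then invoke Definition~\ref{def3.2x}.

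The one place where you diverge from the paper is in establishing adjointability of $V$. The paper does this in one line: since $T$ is adjointable, $I-T$ is adjointable by Lemma~\ref{lemb}(2), and hence $(I-T)^{-1}$ is adjointable by Lemma~\ref{lemb}(1). You instead prove adjointability of each partial sum $S_N$ and pass to the limit using continuity of the matrix-valued inner product. Your route is perfectly valid and in fact mirrors the limiting argument the paper uses later in Proposition~\ref{prosq}; it also illustrates a reusable technique (norm limits of adjointable operators remain adjointable). The paper's route is shorter because Lemma~\ref{lemb}(1) already packages the needed fact about inverses. One minor imprecision: the continuity estimate you quote, $\|\langle \mathbf{h},\mathbf{k}\rangle\|\le \|\mathbf{h}\|\,\|\mathbf{k}\|$, is stated in the paper (Lemma~\ref{lemcs}) with an extra constant $rs$; this does not affect your argument, but you should cite that lemma rather than a bare Cauchy--Schwarz.
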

\begin{proof}
Since $\|T\| <1$,  the operator $I-T$ is invertible and $(I-T)^{-1}=\sum_{n=0}^\infty T^n$. This implies that $\sum_{n=0}^\infty T^n$ is a bounded, linear, and  bijective operator on  $L^2(G, \mathbb{C}^{s\times r})$. Also, $(I-T)^{-1}$ is adjointable with respect to matrix-valued inner product by Lemma \ref{lemb}. Hence, $\big\{\big(\sum_{n=0}^\infty T^n\big)E_k\big\}_{k \in \mathbb{N}}$ is a matrix-valued Riesz basis for $L^2(G, \mathbb{C}^{s\times r})$. The proof is complete.
\end{proof}
In order to get another result based on positivity of an operator, we first observe that if $\{TE_k\}_{k \in \mathbb{N}}$ is a Riesz basis for the space $L^2(G)$ for a bounded, bijective, and positive operator $T$ on $L^2(G)$, then $\{T^{1/2}E_k\}_{k \in \mathbb{N}}$ is also a Riesz basis for the space $L^2(G)$ since the operator $T^{1/2}$ is bijective by Theorem \ref{thpo}. This arises a natural question about the extension of the observation made above to matrix-valued Riesz basis in $L^2(G, \mathbb{C}^{s\times r})$. The only part to be checked here is the matrix-valued adjointability of $T^{1/2}$. We see that the matrix-valued adjointability of $T$ need not imply the matrix-valued adjointability of a square root of $T$. Consider the operator $U$ in Example \ref{remb} satisfying $UU=I$. Clearly, the identity operator $I$ is adjointable with respect to matrix-valued inner product but $U$, a square root of $I$, is not adjointable with respect to matrix-valued inner product. In the view of this argument, we give the following result related to the square root of a positive operator on $L^2(G, \mathbb{C}^{s\times r})$. The following lemma is needed in the proof:
\begin{lem}\label{lemcs}
Let $\textbf{f}$, $\textbf{g}\in L^2(G, \mathbb{C}^{s\times r})$. Then, $\| \langle \textbf{f}, \textbf{g}\rangle \| \le r s \| \textbf{f} \| \| \textbf{g} \|$ where $\|.\|$ is the Frobenius norm on the space $\mathcal{M}_s(\mathbb{C})$.
\end{lem}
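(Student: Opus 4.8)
\textbf{Proof proposal for Lemma \ref{lemcs}.}

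The plan is to reduce the matrix-valued estimate to the classical scalar Cauchy--Schwarz inequality in $L^2(G)$, applied entrywise. Write $\textbf{f} = [f_{ij}]$ and $\textbf{g} = [g_{ij}]$ with $1 \le i \le s$, $1 \le j \le r$. By definition \eqref{dminp}, the $(i,k)$-entry of the matrix $\langle \textbf{f}, \textbf{g}\rangle$ is $\sum_{j=1}^r \int_G f_{ij}\overline{g_{kj}}\, d\mu_G = \sum_{j=1}^r \langle f_{ij}, g_{kj}\rangle_{L^2(G)}$. First I would bound the modulus of this entry: by the triangle inequality and the scalar Cauchy--Schwarz inequality on $L^2(G)$,
\begin{align*}
\Big|\big(\langle \textbf{f}, \textbf{g}\rangle\big)_{ik}\Big| \le \sum_{j=1}^r \|f_{ij}\|_{L^2(G)}\,\|g_{kj}\|_{L^2(G)}.
\end{align*}

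Next I would pass from entrywise bounds to the Frobenius norm of $\langle \textbf{f}, \textbf{g}\rangle$. Squaring and summing over $i,k$ from $1$ to $s$,
\begin{align*}
\|\langle \textbf{f},\textbf{g}\rangle\|^2 = \sum_{i,k=1}^s \Big|\big(\langle \textbf{f}, \textbf{g}\rangle\big)_{ik}\Big|^2 \le \sum_{i,k=1}^s \Big(\sum_{j=1}^r \|f_{ij}\|_{L^2(G)}\,\|g_{kj}\|_{L^2(G)}\Big)^2.
\end{align*}
Applying the Cauchy--Schwarz inequality in $\RR^r$ to the inner sum gives $\big(\sum_j \|f_{ij}\|\,\|g_{kj}\|\big)^2 \le \big(\sum_j \|f_{ij}\|^2\big)\big(\sum_j \|g_{kj}\|^2\big)$, so the double sum factors as $\big(\sum_{i=1}^s\sum_{j=1}^r \|f_{ij}\|_{L^2(G)}^2\big)\big(\sum_{k=1}^s\sum_{j=1}^r \|g_{kj}\|_{L^2(G)}^2\big) = \|\textbf{f}\|^2\,\|\textbf{g}\|^2$ by the definition \eqref{eqnorm1} of the Frobenius norm on $L^2(G,\CC^{s\times r})$. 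This already yields the sharper bound $\|\langle \textbf{f},\textbf{g}\rangle\| \le \|\textbf{f}\|\,\|\textbf{g}\|$, from which the stated inequality with the extra factor $rs$ follows trivially since $rs \ge 1$.

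Alternatively — and this is the route I would actually write up if one wants the constant $rs$ to appear naturally rather than as a lossy over-estimate — one can argue more crudely: bound $|(\langle\textbf{f},\textbf{g}\rangle)_{ik}| \le \sum_{j} \|f_{ij}\|\|g_{kj}\| \le \big(\max_{i,j}\|f_{ij}\|\big)\big(\max_{k,j}\|g_{kj}\|\big)\cdot r \le r\|\textbf{f}\|\,\|\textbf{g}\|$, and then $\|\langle\textbf{f},\textbf{g}\rangle\| \le \sqrt{s^2}\cdot r\|\textbf{f}\|\|\textbf{g}\| = rs\|\textbf{f}\|\|\textbf{g}\|$ after summing $s^2$ entries each bounded by $r\|\textbf{f}\|\|\textbf{g}\|$ — wait, that gives $rs\|\textbf{f}\|\|\textbf{g}\|$ only if one is careful, so I would instead just present the clean entrywise Cauchy--Schwarz argument above and note $rs\ge 1$ at the end. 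There is no real obstacle here: the only mild point of care is keeping the index ranges $1\le i\le s$ versus $1\le j\le r$ straight, since $\textbf{f}$ is $s\times r$ while $\langle\textbf{f},\textbf{g}\rangle$ is $s\times s$, and making sure the two applications of Cauchy--Schwarz (once in $L^2(G)$ per entry, once in $\RR^r$ over the contraction index $j$) are invoked in the right order. The whole proof is a two-line computation once these bookkeeping issues are settled.
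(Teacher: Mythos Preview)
Your argument is correct and follows the same entrywise Cauchy--Schwarz strategy as the paper, but your execution is tighter. The paper bounds each entry via $\big|\sum_{n=1}^r a_n\big|^2 \le r\sum_{n}|a_n|^2$ (picking up a factor $r$), then applies scalar Cauchy--Schwarz in $L^2(G)$, and finally estimates $\sum_{i,j,n}\|f_{in}\|^2\|g_{jn}\|^2$ crudely to reach $r^2s^2\|\textbf{f}\|^2\|\textbf{g}\|^2$. You instead apply Cauchy--Schwarz in $L^2(G)$ first and then Cauchy--Schwarz in $\RR^r$ over the contraction index $j$, which makes the double sum over $i,k$ factor exactly into $\|\textbf{f}\|^2\|\textbf{g}\|^2$; this already gives the sharp constant $1$, and the stated inequality with the extra factor $rs$ follows a fortiori. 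Your ``alternative'' paragraph is looser and, as you yourself note, does not cleanly produce $rs$ without extra care, so drop it and present only the first argument.
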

\begin{proof}
For any $\textbf{f}$,  $\textbf{g} \in L^2(G, \mathbb{C}^{s\times r})$, we compute
\begin{align*}
\|\langle  \textbf{f},\textbf{g} \rangle \|^2 &= \big|\sum\limits_{n=1}^r\int_{G}f_{1n}(x)\overline{g_{1n}(x)} \ d\mu_{G} \big|^2+\cdots+\big|\sum\limits_{n=1}^r \int_{G}f_{sn}(x)\overline{g_{1n}(x)} \ d\mu_{G}\big|^2\\
&\quad +\big|\sum\limits_{n=1}^r \int_{G}f_{1n}(x)\overline{g_{2n}(x)} \ d\mu_{G}\big|^2+\cdots+\big|\sum\limits_{n=1}^r \int_{G}f_{sn}(x)\overline{g_{2n}(x)} \ d\mu_{G}\big|^2+\cdots\\
&\quad  +\big|\sum\limits_{n=1}^r \int_{G}f_{1n}(x)\overline{g_{sn}(x)} \ d\mu_{G}\big|^2+\cdots+\big|\sum\limits_{n=1}^r \int_{G}f_{sn}(x)\overline{g_{sn}(x)} \ d\mu_{G}\big|^2 \\
& \leq  r \Big(\sum\limits_{n=1}^r \big|\int_{G}f_{1n}(x)\overline{g_{1n}(x)} \ d\mu_{G}\big|^2+\cdots+\sum\limits_{n=1}^r \big|\int_{G}f_{sn}(x)\overline{g_{1n}(x)} \ d\mu_{G}\big|^2\\
&\quad  +\sum\limits_{n=1}^r \big| \int_{G}f_{1n}(x)\overline{g_{2n}(x)} \ d\mu_{G}\big|^2+\cdots+\sum\limits_{n=1}^r \big|\int_{G}f_{sn}(x)\overline{g_{2n}(x)} \ d\mu_{G}\big|^2+\cdots\\
&\quad +\sum\limits_{n=1}^r \big|\int_{G}f_{1n}(x)\overline{g_{sn}(x)} \ d\mu_{G}\big|^2+\cdots+\sum\limits_{n=1}^r \big|\int_{G}f_{sn}(x)\overline{g_{sn}(x)} \ d\mu_{G}\big|^2 \Big)\\
& =  r \sum\limits_{i=1}^s \sum\limits_{n=1}^r \Big(\big|\int_{G}f_{in}(x)\overline{g_{1n}(x)} \ d\mu_{G}\big|^2
\quad  + \big| \int_{G}f_{in}(x)\overline{g_{2n}(x)} \ d\mu_{G}\big|^2\\
&\quad  + \cdots+\big|\int_{G}f_{in}(x)\overline{g_{sn}(x)} \ d\mu_{G}\big|^2 \Big)\\
& = r \sum\limits_{i=1}^s \sum\limits_{n=1}^r \sum\limits_{i=1}^s \Big|\int_{G}f_{in}(x)\overline{g_{in}(x)} \ d\mu_{G}\Big|^2 \\
& = r \sum\limits_{i=1}^s \sum\limits_{n=1}^r \sum\limits_{i=1}^s |\langle f_{in}, g_{in} \rangle |^2
 \le r \sum\limits_{i=1}^s \sum\limits_{n=1}^r \sum\limits_{i=1}^s \| f_{in}\|^2 \| g_{in}\|^2
 \le r^2 s^2 \| \textbf{f} \|^2 \| \textbf{g} \|^2.
\end{align*}
This gives the required inequality.
\end{proof}
Next, we give the adjointability  of square root of a positive operator on $L^2(G, \mathbb{C}^{s\times r})$ with respect to the matrix-valued inner product.
\begin{prop}\label{prosq}
Let $T$ be a bounded, linear, and positive operator acting on $L^2(G, \mathbb{C}^{s\times r})$ which satisfies $\langle T\textbf{f}, \textbf{g}\rangle=\langle \textbf{f}, T^*\textbf{g}\rangle$ for all  $\textbf{f}$, $\textbf{g} \in L^2(G, \mathbb{C}^{s\times r})$. Then,
$\big\langle T^{1/2}\textbf{f}, \textbf{g}\big\rangle=\big\langle \textbf{f}, (T^{1/2})^*\textbf{g}\big\rangle$ for all  $\textbf{f}$,   $\textbf{g} \in L^2(G, \mathbb{C}^{s\times r})$.
\end{prop}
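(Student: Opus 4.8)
The plan is to build $T^{1/2}$ from polynomials in $T$ via Theorem \ref{thpo} and then transport the matrix-valued adjoint relation through the limit, using Lemma \ref{lemcs} for continuity. Concretely, I would fix a sequence of polynomials $(p_n)$ with $p_n(T)\to T^{1/2}$ in operator norm, as provided by Theorem \ref{thpo}. Each $p_n(T)$ is a finite linear combination of powers of $T$; since $T$ is adjointable with respect to the matrix-valued inner product, and since the identity operator (and any scalar multiple of an operator adjointable with respect to the matrix-valued inner product) is again adjointable with respect to the matrix-valued inner product, parts $(2)$ and $(3)$ of Lemma \ref{lemb}, applied inductively, show that every $p_n(T)$ is adjointable with respect to the matrix-valued inner product. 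Hence $\langle p_n(T)\textbf{f},\textbf{g}\rangle=\langle \textbf{f},p_n(T)^{*}\textbf{g}\rangle$ for all $\textbf{f}$, $\textbf{g}\in L^2(G,\mathbb{C}^{s\times r})$ and all $n\in\mathbb{N}$.

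Next I would pass to the limit in this identity. Because the Hilbert adjoint is an isometry on $\mathcal{B}(L^2(G,\mathbb{C}^{s\times r}))$, the operator-norm convergence $p_n(T)\to T^{1/2}$ forces $p_n(T)^{*}\to (T^{1/2})^{*}$ in operator norm as well; in particular, for fixed $\textbf{f}$, $\textbf{g}$ we have $\|p_n(T)\textbf{f}-T^{1/2}\textbf{f}\|\to 0$ and $\|p_n(T)^{*}\textbf{g}-(T^{1/2})^{*}\textbf{g}\|\to 0$ in the Frobenius norm. Using the additivity of the matrix-valued inner product in each slot together with Lemma \ref{lemcs},
\[
\big\|\langle p_n(T)\textbf{f},\textbf{g}\rangle-\langle T^{1/2}\textbf{f},\textbf{g}\rangle\big\| = \big\|\langle (p_n(T)-T^{1/2})\textbf{f},\textbf{g}\rangle\big\| \le rs\,\|(p_n(T)-T^{1/2})\textbf{f}\|\,\|\textbf{g}\| \longrightarrow 0,
\]
and similarly $\langle \textbf{f},p_n(T)^{*}\textbf{g}\rangle\to\langle \textbf{f},(T^{1/2})^{*}\textbf{g}\rangle$ in $\mathcal{M}_s(\mathbb{C})$. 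Letting $n\to\infty$ in $\langle p_n(T)\textbf{f},\textbf{g}\rangle=\langle \textbf{f},p_n(T)^{*}\textbf{g}\rangle$ then yields $\langle T^{1/2}\textbf{f},\textbf{g}\rangle=\langle \textbf{f},(T^{1/2})^{*}\textbf{g}\rangle$, which is the assertion.

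The only delicate points are the two bookkeeping steps: that a polynomial in $T$ inherits adjointability with respect to the matrix-valued inner product (immediate once one records that $I$ and scalar multiples are adjointable and then invokes Lemma \ref{lemb}), and that the matrix-valued inner product is jointly continuous, so that the limit may be exchanged with it — this is exactly the purpose of Lemma \ref{lemcs}. I do not expect a genuine obstacle here. A harmless alternative, should one wish to avoid discussing convergence of the adjoints, is to use that the approximating polynomials in the standard construction of $T^{1/2}$ have real coefficients, so that $p_n(T)^{*}=p_n(T)$ and $(T^{1/2})^{*}=T^{1/2}$ (both being positive, hence self-adjoint); only $p_n(T)\textbf{f}\to T^{1/2}\textbf{f}$ and $p_n(T)\textbf{g}\to T^{1/2}\textbf{g}$ are then needed, and the same estimate via Lemma \ref{lemcs} finishes the proof.
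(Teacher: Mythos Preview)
Your proposal is correct and follows essentially the same route as the paper: approximate $T^{1/2}$ by polynomials in $T$ via Theorem~\ref{thpo}, invoke Lemma~\ref{lemb} to get matrix-valued adjointability of each approximant, and use Lemma~\ref{lemcs} to pass the identity to the limit. The paper's proof is slightly terser (it does not explicitly spell out why $\mathcal{A}_k^{*}\textbf{g}\to (T^{1/2})^{*}\textbf{g}$), and your additional remarks on the isometry of the adjoint and the self-adjoint alternative are sound but not needed to match the paper's argument.
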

\begin{proof}
By Theorem \ref{thpo}, there is a sequence $\{\mathcal{A}_k\}_{k \in \mathbb{N}}$ of polynomials in $T$ which converges to the operator $T^{1/2}$. That is,
$\lim\limits_{k \to \infty}\mathcal{A}_k \textbf{f}=T^{1/2} \textbf{f}$, for every $\textbf{f} \in L^2(G, \mathbb{C}^{s\times r})$. Further, for each $k \in \mathbb{N}$, Lemma \ref{lemb} gives the  adjointability  of  $\mathcal{A}_k$ with respect to the matrix-valued inner product.
 By Lemma \ref{lemcs}, for every $\textbf{f}$, $\textbf{g} \in L^2(G, \mathbb{C}^{s\times r})$,  we have
\begin{align*}
\big\| \big\langle \mathcal{A}_k \textbf{f}, \textbf{g}\big\rangle - \big\langle T^{1/2} \textbf{f}, \textbf{g}\big\rangle\big\|
= \big\| \big\langle \mathcal{A}_k \textbf{f} - T^{1/2} \textbf{f}, \textbf{g}\big\rangle\big\|
 \le rs \big\| \mathcal{A}_k \textbf{f} - T^{1/2} \textbf{f} \big\|\big\| \textbf{g}\big\|
\end{align*}
which implies that
$\lim\limits_{k \to \infty}\big\langle\mathcal{A}_k \textbf{f}, \textbf{g}\big\rangle=\big\langle T^{1/2}\textbf{f}, \textbf{g}\big\rangle$. Using this, we compute
\begin{align*}
\big\langle T^{1/2}\textbf{f}, \textbf{g}\big\rangle
&= \big\langle \lim_{k \to \infty}\mathcal{A}_k \textbf{f}, \textbf{g}\big\rangle\\
&=\lim_{k \to \infty}\big\langle\mathcal{A}_k \textbf{f}, \textbf{g}\big\rangle\\
&=\lim_{k \to \infty}\big\langle\textbf{f}, {\mathcal{A}_k}^* \textbf{g}\big\rangle\\
&=\big\langle\textbf{f}, \lim_{k \to \infty}{\mathcal{A}_k}^* \textbf{g}\big\rangle\\
&=\big\langle\textbf{f}, (T^{1/2})^* \textbf{g}\big\rangle \ \text{for all} \ \textbf{f}, \ \textbf{g} \in L^2(G, \mathbb{C}^{s\times r}).
\end{align*}
Hence, $T^{1/2}$ is adjointable with respect to the  matrix-valued inner product.
\end{proof}
As a consequence of Proposition \ref{prosq}, we get the following interplay between matrix-valued Riesz bases of the space $L^2(G, \mathbb{C}^{s\times r})$ in terms of positive operators  on $L^2(G, \mathbb{C}^{s\times r})$.
\begin{thm}\label{thpsq}
Let $T$ be a bounded, linear, and bijective operator acting on $L^2(G, \mathbb{C}^{s\times r})$  which is adjointable with respect to matrix-valued inner product. If $T$ is positive, then $\{TE_k\}_{k \in \mathbb{N}}$ is a matrix-valued Riesz basis for $L^2(G, \mathbb{C}^{s\times r})$ if and only if $\{T^{1/2}E_k\}_{k \in \mathbb{N}}$ is a matrix-valued Riesz basis for $L^2(G, \mathbb{C}^{s\times r})$.
\end{thm}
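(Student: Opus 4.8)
The plan is to show that, under the stated hypotheses, \emph{both} collections $\{TE_k\}_{k\in\mathbb{N}}$ and $\{T^{1/2}E_k\}_{k\in\mathbb{N}}$ are matrix-valued Riesz bases for $L^2(G,\mathbb{C}^{s\times r})$; once this is established the biconditional holds trivially, and the substance has already been carried out in Theorem \ref{thpo} and Proposition \ref{prosq}.

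First I would dispose of the collection $\{TE_k\}_{k\in\mathbb{N}}$: by hypothesis $T$ is bounded, linear, bijective and adjointable with respect to the matrix-valued inner product, and $\{E_k\}_{k\in\mathbb{N}}$ is a matrix-valued orthonormal basis, so $\{TE_k\}_{k\in\mathbb{N}}$ is a matrix-valued Riesz basis directly from Definition \ref{def3.2x}. Positivity is not even needed for this half.

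Next I would check the three structural properties of $T^{1/2}$. Boundedness and positivity of $T^{1/2}$ are immediate from Theorem \ref{thpo}, and invertibility (hence bijectivity) of $T^{1/2}$ also follows from Theorem \ref{thpo} because $T$ is invertible. The one nonobvious point, which I expect to be the crux, is that $T^{1/2}$ is adjointable with respect to the matrix-valued inner product; in general the square root of an adjointable operator need not be adjointable (for instance $U$ in Example \ref{remb} is a square root of the adjointable operator $I$ but is itself not adjointable). However, $T$ is positive and satisfies $\langle T\mathbf{f},\mathbf{g}\rangle=\langle\mathbf{f},T^*\mathbf{g}\rangle$ for all $\mathbf{f},\mathbf{g}\in L^2(G,\mathbb{C}^{s\times r})$, so Proposition \ref{prosq} applies verbatim and yields $\langle T^{1/2}\mathbf{f},\mathbf{g}\rangle=\langle\mathbf{f},(T^{1/2})^*\mathbf{g}\rangle$ for all $\mathbf{f},\mathbf{g}\in L^2(G,\mathbb{C}^{s\times r})$.

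Combining these observations, $T^{1/2}$ is a bounded, linear, bijective operator on $L^2(G,\mathbb{C}^{s\times r})$ that is adjointable with respect to the matrix-valued inner product, so $\{T^{1/2}E_k\}_{k\in\mathbb{N}}$ is a matrix-valued Riesz basis by Definition \ref{def3.2x}. Since each of $\{TE_k\}_{k\in\mathbb{N}}$ and $\{T^{1/2}E_k\}_{k\in\mathbb{N}}$ is a matrix-valued Riesz basis outright, the stated equivalence follows at once. I do not anticipate any genuine obstacle beyond the correct invocation of Proposition \ref{prosq}; the entire subtlety of the statement lies in the fact that adjointability need not pass to square roots, which is exactly why the positivity of $T$ is assumed.
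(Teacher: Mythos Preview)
Your proposal is correct and matches the paper's approach: the theorem is stated as a direct consequence of Proposition~\ref{prosq}, and the argument runs exactly as you describe---Theorem~\ref{thpo} gives boundedness and bijectivity of $T^{1/2}$, Proposition~\ref{prosq} supplies its matrix-valued adjointability, and Definition~\ref{def3.2x} finishes. Your additional observation that, under the stated hypotheses, both collections are Riesz bases outright (so the biconditional is somewhat vacuous) is accurate and makes the logical structure more transparent than the paper's bare ``as a consequence of Proposition~\ref{prosq}''.
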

\begin{cor}\label{cop}
Let $\{UE_k\}_{k \in \mathbb{N}}$ be a matrix-valued Riesz basis for $L^2(G, \mathbb{C}^{s\times r})$. Then, $\{(U^*U)^{1/2}E_k\}_{k \in \mathbb{N}}$ is a matrix-valued Riesz basis for $L^2(G, \mathbb{C}^{s\times r})$.
\end{cor}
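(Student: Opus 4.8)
The plan is to apply Theorem \ref{thpsq} to the operator $T = U^{*}U$, so the whole argument reduces to checking that $U^{*}U$ satisfies the hypotheses of that theorem and that $\{(U^{*}U)E_k\}_{k\in\mathbb{N}}$ is already a matrix-valued Riesz basis. First I would record that $U^{*}U$ is bounded and linear as a composition of bounded linear operators, and bijective because $U$, and hence $U^{*}$, is bijective. For adjointability with respect to the matrix-valued inner product, note that $U$ is adjointable by hypothesis, and $U^{*}$ is then adjointable too with $(U^{*})^{*}=U$: using property $(ii)$ of the matrix-valued inner product, $\langle U^{*}\mathbf{f},\mathbf{g}\rangle = \langle \mathbf{g},U^{*}\mathbf{f}\rangle^{*} = \langle U\mathbf{g},\mathbf{f}\rangle^{*} = \langle \mathbf{f},U\mathbf{g}\rangle$ for all $\mathbf{f},\mathbf{g}$. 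Hence by Lemma \ref{lemb}$(3)$ the product $U^{*}U$ is adjointable with respect to the matrix-valued inner product. Finally $U^{*}U$ is self-adjoint and positive in the Hilbert space sense, since $\langle U^{*}U\mathbf{f},\mathbf{f}\rangle_{L^{2}} = \langle U\mathbf{f},U\mathbf{f}\rangle_{L^{2}} = \|U\mathbf{f}\|^{2}\ge 0$ for every $\mathbf{f}\in L^2(G,\mathbb{C}^{s\times r})$.

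Next I would observe that $\{(U^{*}U)E_k\}_{k\in\mathbb{N}}$ is itself a matrix-valued Riesz basis for $L^2(G,\mathbb{C}^{s\times r})$: the operator $U^{*}U$ is bounded, linear, bijective, and adjointable with respect to the matrix-valued inner product, and $\{E_k\}_{k\in\mathbb{N}}$ is a matrix-valued orthonormal basis, so $\{(U^{*}U)E_k\}_{k\in\mathbb{N}}$ has exactly the form demanded by Definition \ref{def3.2x}. This is precisely the remark made at the opening of Section \ref{secIII}.

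Now I would invoke Theorem \ref{thpsq} with $T = U^{*}U$. Since $T$ is bounded, linear, bijective, adjointable with respect to the matrix-valued inner product, and positive, the theorem gives that $\{TE_k\}_{k\in\mathbb{N}}$ is a matrix-valued Riesz basis if and only if $\{T^{1/2}E_k\}_{k\in\mathbb{N}}$ is. Combining this with the previous step yields that $\{(U^{*}U)^{1/2}E_k\}_{k\in\mathbb{N}}$ is a matrix-valued Riesz basis for $L^2(G,\mathbb{C}^{s\times r})$, which is the claim. I do not expect any real obstacle here: the substantive content — adjointability of the square root of a positive operator (Proposition \ref{prosq}) and the equivalence it powers (Theorem \ref{thpsq}) — is already in place, and the corollary is just the specialization $T=U^{*}U$; the only point that needs a line of care is confirming that $U^{*}$ is adjointable with respect to the matrix-valued inner product so that Lemma \ref{lemb} may be applied to $U^{*}U$.
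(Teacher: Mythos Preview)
Your proposal is correct and follows essentially the same route as the paper: the corollary is stated immediately after Theorem \ref{thpsq} as its direct specialization to $T=U^{*}U$, and the paper has already remarked at the opening of Section \ref{secIII} that $\{U^{*}UE_k\}_{k\in\mathbb{N}}$ is a matrix-valued Riesz basis with adjointability of $U^{*}U$ coming from Lemma \ref{lemb}. Your extra line verifying that $U^{*}$ itself is adjointable with respect to the matrix-valued inner product (so that Lemma \ref{lemb}$(3)$ genuinely applies to the pair $U^{*}$, $U$) is a detail the paper leaves implicit, but it is a welcome clarification rather than a departure in method.
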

We provide the demonstration of Theorem \ref{thpsq} with the following example. We will use the position operator defined from $L^2(0,1)$ to itself by $Tx(t)=tx(t), \ t \in (0,1)$. This operator plays a significant role in quantum theory.
\begin{exa}\label{expo}
Let  $\Theta : L^2((0,1), \mathbb{C}^{2\times 2}) \to L^2((0,1), \mathbb{C}^{2\times 2})$ be defined as
\begin{align*}
\Theta\textbf{f}(\xi)=\xi \textbf{f}(\xi), \ \xi \in (0,1), \ \textbf{f}\in L^2((0,1), \mathbb{C}^{2\times 2}).
\end{align*}
Then,  $\Theta \begin{bmatrix}
f_{11}(\xi) & f_{12}(\xi)\\
f_{21}(\xi)& f_{22}(\xi)
 \end{bmatrix}=
 \begin{bmatrix}
 \xi f_{11}(\xi)& \xi f_{12}(\xi)\\
 \xi f_{21}(\xi)&\xi f_{22}(\xi)
  \end{bmatrix}, \,  \, \textbf{f} = \big[f_{ij}\big]_{1\leq  i, j \leq2}\in L^2((0,1), \mathbb{C}^{2\times 2})$.
Clearly, $\Theta$ is a linear, bounded, and bijective operator on $L^2((0,1), \mathbb{C}^{2\times 2})$.

 In order to find the Hilbert-adjoint operator of $\Theta$, for every  $\textbf{f}$, $\textbf{g} \in L^2((0,1), \mathbb{C}^{2\times 2})$,  we have
\begin{align*}
 \text{tr} \langle \Theta \textbf{f}, \textbf{g}\rangle &=  \text{tr}  \int_{0}^1 \xi \textbf{f}(\xi)\textbf{g}^*(\xi)d\xi
=\text{tr}  \int_{0}^1  \textbf{f}(\xi)\xi \textbf{g}^*(\xi)d\xi
= \text{tr} \langle\textbf{f},  \Theta \textbf{g}\rangle.
\end{align*}
This implies that $\Theta^*=\Theta$. Moreover, $\langle \Theta \textbf{f}, \textbf{g}\rangle=\langle\textbf{f},  \Theta \textbf{g}\rangle , \ \textbf{f}, \textbf{g} \in L^2((0,1), \mathbb{C}^{2\times 2})$. That is,  $\Theta$ is adjointable with respect to matrix-valued inner product.

 For any $\textbf{f}\in L^2((0,1), \mathbb{C}^{2\times 2})$, we have
\begin{align*}
 \text{tr} \langle \Theta \textbf{f}, \textbf{f}\rangle
&= \text{tr}  \int_{0}^1 \begin{bmatrix}
\xi f_{11}(\xi)&\xi f_{12}(\xi)\\
\xi  f_{21}(\xi)&\xi f_{22}(\xi)
  \end{bmatrix}\begin{bmatrix}
 \overline{f_{11}(\xi)} & \overline{f_{21}(\xi)}\\
  \overline{f_{12}(\xi)} & \overline{f_{22}(\xi)}
  \end{bmatrix} d\xi \\
  &=  \text{tr}  \int_{0}^1 \begin{bmatrix}
 \xi f_{11}(\xi) \overline{f_{11}(\xi)}+ \xi f_{12}(\xi) \overline{f_{12}(\xi)} &   \xi f_{11}(\xi) \overline{f_{21}(\xi)}+ \xi f_{12}(\xi) \overline{f_{22}(\xi)}\\
 \xi f_{21}(\xi) \overline{f_{11}(\xi)}+ \xi f_{22}(\xi) \overline{f_{12}(\xi)} &   \xi f_{21}(\xi) \overline{f_{21}(\xi)}+ \xi f_{22}(\xi) \overline{f_{22}(\xi)}\\
  \end{bmatrix} d\xi\\
&= \int_{0}^1 \xi \big( |f_{11}(\xi)|^2+ |f_{12}(\xi)|^2 + |f_{21}(\xi)|^2+  |f_{22}(\xi)|^2\big) d\xi \\
& \ge 0.
\end{align*}
This implies that $\Theta$ is positive.

 It can be easily seen that the operator $\mathcal{W} : L^2((0,1), \mathbb{C}^{2\times 2}) \to L^2((0,1), \mathbb{C}^{2\times 2})$ defined by
\begin{align*}
\mathcal{W}\textbf{f}(\xi)=\sqrt{\xi} \ \textbf{f}(\xi), \ \xi \in (0,1), \ \textbf{f}\in L^2((0,1), \mathbb{C}^{2\times 2})
\end{align*}
is a linear, bounded, and positive operator such that $\mathcal{W}\mathcal{W}=\Theta$ and hence $\mathcal{W}=\Theta^{1/2}$.

Using the orthonormal basis $\{e^{2\pi i k (\cdot)}\}_{k \in \mathbb{Z}}$ of $L^2(0,1)$ (see details in \cite{Heil}), define
\begin{align*}
E_k=\begin{bmatrix}
 e^{2\pi i k (\cdot)} &0\\
  0&e^{2\pi i k (\cdot)}
  \end{bmatrix}, \ k \in \mathbb{Z}.
\end{align*}
Then, $\{E_k\}_{k \in \mathbb{Z}}$ is a matrix-valued orthonormal basis for $L^2((0,1), \mathbb{C}^{2\times 2})$. By Theorem \ref{thpsq}, both families
\begin{align*}
 \left\{ \begin{bmatrix}
\xi  e^{2\pi i k \xi} &0\\
  0&\xi e^{2\pi i k \xi}
  \end{bmatrix} \right\}_{k \in \mathbb{Z}} \ \text{and} \  \left\{ \begin{bmatrix}
\sqrt{\xi}  e^{2\pi i k \xi} &0\\
  0&\sqrt{\xi} e^{2\pi i k \xi}
  \end{bmatrix} \right\}_{k \in \mathbb{Z}}
\end{align*}
  are Riesz bases for $L^2((0,1), \mathbb{C}^{2\times 2})$.
\end{exa}
Now, we observe that if $\{UE_k\}_{k \in \mathbb{N}}$ is a matrix-valued Riesz basis for $L^2(G, \mathbb{C}^{s\times r})$ and $U=U_1U_2$ for some bounded linear  operators $U_1, U_2$ acting on $L^2(G, \mathbb{C}^{s\times r})$. Then, $\{U_1E_k\}_{k \in \mathbb{N}}$ and  $\{U_2 E_k\}_{k \in \mathbb{N}}$ may not be matrix-valued Riesz bases for $L^2(G, \mathbb{C}^{s\times r})$, by Example \ref{remb}. However, due to polar decomposition of operators (details in \cite{Ru}), we can always obtain  two matrix-valued Riesz bases associated with a given matrix-valued Riesz basis. The construction for the same can be found in the following result.
\begin{thm}\label{thmpol}
Let $\{UE_k\}_{k \in \mathbb{N}}$ be a matrix-valued Riesz basis for $L^2(G, \mathbb{C}^{s\times r})$. Then, there exist bounded linear operators $\mathcal{W}$, $\mathcal{P}$ on $L^2(G, \mathbb{C}^{s\times r})$ such that $\{\mathcal{W}E_k\}_{k \in \mathbb{N}}$ and  $\{\mathcal{P} E_k\}_{k \in \mathbb{N}}$ are matrix-valued Riesz bases for $L^2(G, \mathbb{C}^{s\times r})$.
\end{thm}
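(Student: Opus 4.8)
The plan is to invoke the polar decomposition of $U$ and then verify that each of the two factors meets all the requirements of Definition \ref{def3.2x}, namely boundedness, linearity, bijectivity, and adjointability with respect to the matrix-valued inner product. Since $\{UE_k\}_{k\in\mathbb{N}}$ is a matrix-valued Riesz basis, the operator $U$ is bounded, linear, bijective, and satisfies $\langle U\textbf{f},\textbf{g}\rangle=\langle \textbf{f},U^*\textbf{g}\rangle$ for all $\textbf{f},\textbf{g}\in L^2(G,\mathbb{C}^{s\times r})$. I would set $\mathcal{P}=(U^*U)^{1/2}$ and $\mathcal{W}=U\mathcal{P}^{-1}$, so that $U=\mathcal{W}\mathcal{P}$.

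First I would treat $\mathcal{P}$. The operator $U^*U$ is bounded, linear, and positive; it is adjointable with respect to the matrix-valued inner product by Lemma \ref{lemb}, and it is bijective because $U$ is. Hence, by Theorem \ref{thpo}, $\mathcal{P}=(U^*U)^{1/2}$ is a well-defined bounded positive operator, and it is invertible since $U^*U$ is invertible. By Proposition \ref{prosq}, $\mathcal{P}$ is adjointable with respect to the matrix-valued inner product. Therefore $\{\mathcal{P}E_k\}_{k\in\mathbb{N}}$ is a matrix-valued Riesz basis for $L^2(G,\mathbb{C}^{s\times r})$; this is precisely the content of Corollary \ref{cop}.

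Next I would handle $\mathcal{W}=U\mathcal{P}^{-1}$. Since $\mathcal{P}^{-1}$ exists and is bounded and linear, so is $\mathcal{W}$, and being a composition of two bijections $\mathcal{W}$ is bijective. For adjointability, Lemma \ref{lemb}$(1)$ shows $\mathcal{P}^{-1}$ is adjointable with respect to the matrix-valued inner product, and then Lemma \ref{lemb}$(3)$ shows $\mathcal{W}=U\mathcal{P}^{-1}$ is adjointable with respect to the matrix-valued inner product. By Definition \ref{def3.2x}, $\{\mathcal{W}E_k\}_{k\in\mathbb{N}}$ is a matrix-valued Riesz basis for $L^2(G,\mathbb{C}^{s\times r})$, which completes the argument.

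The proof is largely bookkeeping over the hypotheses, and the one genuinely delicate point is the adjointability of $\mathcal{P}=(U^*U)^{1/2}$: as noted in the discussion preceding Lemma \ref{lemcs}, adjointability with respect to the matrix-valued inner product need not pass to square roots, which is exactly why Proposition \ref{prosq} must be invoked rather than a general operator-theoretic principle. If desired, one may additionally observe that $\mathcal{W}$ is in fact unitary, being the partial-isometry part of a bijective operator, but this stronger property is not required for the stated conclusion.
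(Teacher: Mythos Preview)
Your proof is correct and follows essentially the same route as the paper: both use the polar decomposition $U=\mathcal{W}\mathcal{P}$ with $\mathcal{P}=(U^*U)^{1/2}$ and $\mathcal{W}=U\mathcal{P}^{-1}$, then invoke Corollary \ref{cop} (which rests on Proposition \ref{prosq}) for $\mathcal{P}$ and Lemma \ref{lemb} for $\mathcal{W}$. Your write-up is simply more explicit about the intermediate steps.
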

\begin{proof}
By polar decomposition, the invertible operator $U$ can be written as a composition of two bounded linear invertible operators on $L^2(G, \mathbb{C}^{s\times r})$. To be more specific, $U=\mathcal{W}\mathcal{P}$ where $\mathcal{P}=(U^*U)^{1/2}$ and $\mathcal{W}=U\mathcal{P}^{-1}$.
By Corollary \ref{cop},  $\{\mathcal{P} E_k\}_{k \in \mathbb{N}}$ is a matrix-valued Riesz basis for $L^2(G, \mathbb{C}^{s\times r})$. By Lemma \ref{lemb}, the operator $\mathcal{W}$ is adjointable with respect to matrix-valued inner product and hence the collection  $\{\mathcal{W} E_k\}_{k \in \mathbb{N}}$ is a matrix-valued Riesz basis for $L^2(G, \mathbb{C}^{s\times r})$. The proof is complete.
\end{proof}
One can easily see that if $\{UE_k\}_{k \in \mathbb{N}}$ is a matrix-valued Riesz basis for $L^2(G, \mathbb{C}^{s\times r})$, then $\{U^n E_k\}_{k \in \mathbb{N}}$ is also a matrix-valued Riesz basis for $L^2(G, \mathbb{C}^{s\times r})$ for every $n \in \mathbb{N}$. That is, for an operator used in the construction of a matrix-valued Riesz basis, the nth power of the operator can be used for constructing different matrix-valued Riesz bases,  for any $n \in \mathbb{N}$. In the following result, we give similar argument about the $\frac{1}{2^n}$th power of the underlying operator. The proof can be obtained by applying Theorem \ref{thpsq} repeatedly.
\begin{thm}\label{thmpwr}
Let $\{TE_k\}_{k \in \mathbb{N}}$ be a matrix-valued Riesz basis for $L^2(G, \mathbb{C}^{s\times r})$. If $T$ is positive, then $\{T^{1/2^n} E_k\}_{k \in \mathbb{N}}$ is also a matrix-valued Riesz basis for $L^2(G, \mathbb{C}^{s\times r})$ for every $n \in \mathbb{N}$, where
\begin{align*}
T^{1/2^n}=\left({\Big(T^{1/2}\Big)^{1/2 \underbrace{\cdots}_{n\text{-times}}}}\right)^{1/2}, \ n \in \mathbb{N}.
\end{align*}
\end{thm}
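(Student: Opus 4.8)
The plan is to argue by induction on $n$, with Theorem \ref{thpsq} serving as the single engine at every step. For the base case $n=1$: since $\{TE_k\}_{k\in\mathbb{N}}$ is a matrix-valued Riesz basis, Definition \ref{def3.2x} already tells us that $T$ is bounded, linear, bijective, and adjointable with respect to the matrix-valued inner product; together with the standing hypothesis that $T$ is positive, Theorem \ref{thpsq} immediately gives that $\{T^{1/2}E_k\}_{k\in\mathbb{N}}$ is a matrix-valued Riesz basis.

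For the inductive step, assume $\{T^{1/2^n}E_k\}_{k\in\mathbb{N}}$ is a matrix-valued Riesz basis for some $n\in\mathbb{N}$ and put $S:=T^{1/2^n}$. I would then check the four hypotheses of Theorem \ref{thpsq} for $S$. First, $S$ is bounded and linear, being a square root in the sense of Theorem \ref{thpo} (and Remark \ref{remsq} controls its norm). Second, $S$ is bijective: iterating the final clause of Theorem \ref{thpo}, the invertibility of $T$ forces $T^{1/2}$ invertible, hence $T^{1/4}$ invertible, and so on up to $T^{1/2^n}=S$. Third, $S$ is positive, again by iterating the positivity clause of Theorem \ref{thpo}. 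Fourth, $S$ is adjointable with respect to the matrix-valued inner product: $T$ is positive and matrix-adjointable, so Proposition \ref{prosq} makes $T^{1/2}$ matrix-adjointable, and since $T^{1/2}$ is also positive, a further application of Proposition \ref{prosq} makes $T^{1/4}$ matrix-adjointable, and inductively $T^{1/2^n}=S$ is matrix-adjointable. With all four hypotheses verified, Theorem \ref{thpsq} applied to $S$ yields that $\{S^{1/2}E_k\}_{k\in\mathbb{N}}=\{T^{1/2^{n+1}}E_k\}_{k\in\mathbb{N}}$ is a matrix-valued Riesz basis, completing the induction.

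The only point that needs care is the bookkeeping behind the fourth item: Proposition \ref{prosq} applies only to an operator that is simultaneously positive and matrix-adjointable, so the induction on adjointability must be carried out together with the induction on positivity — one cannot cite Proposition \ref{prosq} for $T^{1/2^{j}}$ without having first recorded that $T^{1/2^{j-1}}$ is positive. Once this simultaneous induction (positivity and matrix-adjointability of every iterated root $T^{1/2^{j}}$ for $0\le j\le n$) is stated cleanly, the argument is a direct chain of citations to Theorem \ref{thpo}, Proposition \ref{prosq}, and Theorem \ref{thpsq}, with no genuine computation involved.
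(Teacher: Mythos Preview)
Your argument is correct and is essentially the paper's own approach: the paper simply notes that the result follows by applying Theorem \ref{thpsq} repeatedly, and your induction is precisely that repeated application with the hypotheses (boundedness, bijectivity, positivity, matrix-adjointability) of each iterated square root carefully checked via Theorem \ref{thpo} and Proposition \ref{prosq}. There is no substantive difference to flag.
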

\begin{rem}
In \cite{JV9}, it is proved that a matrix-valued Riesz basis $\{U E_k\}_{k \in \mathbb{N}}$ for $L^2(G, \mathbb{C}^{s\times r})$ is a  frame for $L^2(G, \mathbb{C}^{s\times r})$ with frame bounds $\|U^{-1}\|^{-2}$ and $\|U\|^2$. Therefore, the matrix-valued Riesz basis $\{T^{1/2^n}E_k\}_{k \in \mathbb{N}}$ in Theorem \ref{thmpwr} is a matrix-valued frame for $L^2(G, \mathbb{C}^{s\times r})$ with frame bounds $\frac{1}{\|(T^{1/2^n})^{-1}\|^2}$ and $\|T^{1/2^n}\|^2$. Using $\|T^{1/2}\|=\|T\|^{1/2}$ by Remark \ref{remsq}, we have
\begin{align*}
&\Big\|\big(T^{1/2}\big)^{1/2}\Big\|=\big\|T^{1/2}\big\|^{1/2}=\|T\|^{1/2^2},\\
\intertext{repeating above steps, we get}
&\Big\|T^{1/2^n}\Big\|=\|T\|^{1/2^n},\ n \in \mathbb{N}.
\end{align*}
Now, $T=T^{1/2}T^{1/2}$ gives $T^{-1}=(T^{1/2})^{-1}(T^{1/2})^{-1}$ which further implies that $(T^{1/2})^{-1}=(T^{-1})^{1/2}$. This gives
\begin{align*}
\Big\|(T^{1/2^n})^{-1}\Big\|=\Big\|(T^{-1})^{1/2^n}\Big\|=\|T^{-1}\|^{1/2^n},\ n \in \mathbb{N}.
\end{align*}
Therefore, the frame bounds of $\{T^{1/2^n} E_k\}_{k \in \mathbb{N}}$ are $\frac{1}{\|T^{-1}\|^{1/2^{n-1}}}$ and $\|T\|^{1/2^{n-1}}$. This leads to an important observation regarding the frame properties of a Riesz basis. To be precise, the obtained frame bounds tend to $1$ as $n \to \infty$ and hence the frame $\{T^{1/2^n} E_k\}_{k \in \mathbb{N}}$ approaches to become a Parseval frame as $n \to \infty$.
\end{rem}

Next, it is quite obvious that if $\{TE_k\}_{k \in \mathbb{N}}$ is a matrix-valued Riesz basis for $L^2(G, \mathbb{C}^{s\times r})$, then $\big\{(\|T\| I) E_k\big\}_{k \in \mathbb{N}}$ is also a matrix-valued Riesz basis for $L^2(G, \mathbb{C}^{s\times r})$. Moreover, if $T$ is a positive operator, we have
\begin{align*}
0 \le  \text{tr}\langle T \textbf{f}, \textbf{f} \rangle  \le \|T \textbf{f}\| \|\textbf{f}\|\le\|T \|\|\textbf{f}\|^2= \|T \| \text{tr}\langle \textbf{f}, \textbf{f} \rangle=\text{tr}\langle \|T\| I\textbf{f}, \textbf{f} \rangle \ \text{for all} \ \textbf{f} \in L^2(G, \mathbb{C}^{s\times r}).
\end{align*}
That is, $\text{tr}\langle T \textbf{f}, \textbf{f} \rangle  \le \text{tr}\langle \|T\|I \textbf{f}, \textbf{f} \rangle \ \text{for all} \ \textbf{f} \in L^2(G, \mathbb{C}^{s\times r})$.
This observation can be generalized using the following result given in \cite{PS}.
\begin{prop}\cite[Proposition 3.7]{PS}\label{2ps}
Let $P_1$ and $P_2$ be positive operators on a Hilbert space $\mathcal{H}$ such that $P_2- P_1 \ge 0$. Then, $P_2$ is an invertible operator if the operator $P_1$ is invertible.
\end{prop}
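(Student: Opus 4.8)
The plan is to show that the hypotheses force $P_2$ to be bounded below by a positive multiple of the identity, and then to use self-adjointness of $P_2$ to upgrade ``bounded below'' to ``invertible''. First I would record the elementary fact that a positive invertible operator is coercive. Writing $P_1^{1/2}$ for the square root of $P_1$ (Theorem \ref{thpo}), which is again positive and invertible with $(P_1^{1/2})^{-1}=(P_1^{-1})^{1/2}$ and $\|(P_1^{1/2})^{-1}\|=\|P_1^{-1}\|^{1/2}$, one has for every $x\in\mathcal{H}$
\[
\langle P_1 x, x\rangle=\|P_1^{1/2}x\|^2\ge \|(P_1^{1/2})^{-1}\|^{-2}\|x\|^2=\|P_1^{-1}\|^{-1}\|x\|^2 .
\]
Put $c:=\|P_1^{-1}\|^{-1}>0$.

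Next I would transfer this estimate to $P_2$ using $P_2-P_1\ge 0$: for every $x\in\mathcal{H}$,
\[
\langle P_2 x, x\rangle=\langle (P_2-P_1)x, x\rangle+\langle P_1 x, x\rangle\ge\langle P_1 x, x\rangle\ge c\|x\|^2 .
\]
Applying the Cauchy--Schwarz inequality gives $\|P_2 x\|\,\|x\|\ge\langle P_2 x, x\rangle\ge c\|x\|^2$, hence $\|P_2 x\|\ge c\|x\|$ for all $x$. Consequently $P_2$ is injective and has closed range.

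Finally, since $P_2$ is self-adjoint, $\overline{\operatorname{ran}(P_2)}^{\perp}=\ker(P_2^{*})=\ker(P_2)=\{0\}$, so $\operatorname{ran}(P_2)$ is dense; being also closed, $\operatorname{ran}(P_2)=\mathcal{H}$. Thus $P_2$ is a bounded linear bijection, and by the bounded inverse theorem $P_2^{-1}$ is bounded, i.e.\ $P_2$ is invertible. The main obstacle is really the very first step: one must extract from positivity plus invertibility of $P_1$ a \emph{uniform coercivity} bound $\langle P_1 x, x\rangle\ge c\|x\|^2$ and not merely $\|P_1 x\|\ge c\|x\|$, which is exactly where passing to the square root (equivalently, knowing $\sigma(P_1)\subset[c,\|P_1\|]$) is used; once $P_2\ge cI$ is in hand, the rest is routine Hilbert-space functional analysis.
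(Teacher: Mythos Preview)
Your argument is correct. The paper, however, does not supply its own proof of this proposition: it is quoted verbatim from \cite[Proposition~3.7]{PS} and used as a black box, so there is nothing to compare against. For what it is worth, your route---extracting the coercivity $\langle P_1x,x\rangle\ge \|P_1^{-1}\|^{-1}\|x\|^2$ via the positive square root, transferring it to $P_2$ using $P_2-P_1\ge 0$, and then invoking self-adjointness to pass from ``bounded below'' to ``bijective''---is exactly the standard proof one finds in operator-theory texts (including, in essence, the one in \cite{PS}).
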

As a consequence of Proposition \ref{2ps}, we get the following:
\begin{thm}
Let $T$ and $\Omega$ be positive operators on $L^2(G, \mathbb{C}^{s\times r})$ that are adjointable with respect to matrix-valued inner product. Let $\{TE_k\}_{k \in \mathbb{N}}$ be a matrix-valued Riesz basis for $L^2(G, \mathbb{C}^{s\times r})$. Then, $\{\Omega E_k\}_{k \in \mathbb{N}}$ is a matrix-valued Riesz basis for $L^2(G, \mathbb{C}^{s\times r})$ if
\begin{align*}
\text{tr}\langle T \textbf{f}, \textbf{f} \rangle  \le \text{tr}\langle \Omega \textbf{f}, \textbf{f} \rangle, \ \textbf{f} \in L^2(G, \mathbb{C}^{s\times r}).
\end{align*}
\end{thm}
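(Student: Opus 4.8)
The plan is to reduce the statement to Proposition \ref{2ps}. The first step is to translate the trace hypothesis into an operator inequality on the Hilbert space $\big(L^2(G,\mathbb{C}^{s\times r}),\langle\cdot,\cdot\rangle_{L^2}\big)$. By the identity $\langle\mathbf{f},\mathbf{g}\rangle_{L^2}=\text{tr}\langle\mathbf{f},\mathbf{g}\rangle$ from \eqref{indenIIhcI}, the assumed inequality $\text{tr}\langle T\mathbf{f},\mathbf{f}\rangle\le\text{tr}\langle\Omega\mathbf{f},\mathbf{f}\rangle$ is precisely $\langle T\mathbf{f},\mathbf{f}\rangle_{L^2}\le\langle\Omega\mathbf{f},\mathbf{f}\rangle_{L^2}$ for all $\mathbf{f}\in L^2(G,\mathbb{C}^{s\times r})$, that is, $\Omega-T\ge 0$ in the usual operator-theoretic sense; moreover $T$ and $\Omega$ are already positive by hypothesis.

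Next I would extract invertibility of $T$ from the Riesz-basis hypothesis. By Definition \ref{def3.2x}, since $\{TE_k\}_{k\in\mathbb{N}}$ is a matrix-valued Riesz basis for $L^2(G,\mathbb{C}^{s\times r})$, the operator $T$ is bounded, linear, and bijective on $L^2(G,\mathbb{C}^{s\times r})$, hence invertible. Applying Proposition \ref{2ps} with $P_1=T$ and $P_2=\Omega$, the condition $\Omega-T\ge 0$ together with the invertibility of $T$ yields that $\Omega$ is an invertible operator, i.e.\ bounded, linear, and bijective on $L^2(G,\mathbb{C}^{s\times r})$.

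Finally, combining this with the hypothesis that $\Omega$ is adjointable with respect to the matrix-valued inner product, Definition \ref{def3.2x} applied to the matrix-valued orthonormal basis $\{E_k\}_{k\in\mathbb{N}}$ and the operator $\Omega$ shows that $\{\Omega E_k\}_{k\in\mathbb{N}}$ is a matrix-valued Riesz basis for $L^2(G,\mathbb{C}^{s\times r})$, which completes the proof.

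As for the main obstacle: there is essentially no hard step here — the argument is a short chain of implications once Proposition \ref{2ps} is invoked. The only points requiring care are the harmless identification of the trace pairing $\text{tr}\langle\cdot,\cdot\rangle$ with the scalar inner product $\langle\cdot,\cdot\rangle_{L^2}$ (so that the sign hypothesis acquires its standard operator-theoretic meaning) and the observation that being of the form $\{TE_k\}_{k\in\mathbb{N}}$ in the sense of Definition \ref{def3.2x} forces $T$ to be invertible, which is exactly the input needed to feed Proposition \ref{2ps}.
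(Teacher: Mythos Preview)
Your proposal is correct and follows exactly the approach the paper intends: the theorem is stated in the paper as an immediate consequence of Proposition~\ref{2ps}, and your argument spells out precisely that chain --- identifying the trace inequality with $\Omega-T\ge 0$ via \eqref{indenIIhcI}, extracting invertibility of $T$ from Definition~\ref{def3.2x}, applying Proposition~\ref{2ps} to conclude $\Omega$ is invertible, and then returning to Definition~\ref{def3.2x}. There is nothing to add.
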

It would be interesting to see if any matrix-valued Riesz bases can be obtained without imposing the condition of positivity on operator $T$. We show that a bigger class of operators, that is, the class of self-adjoint operators can be used to construct different matrix-valued Riesz bases for $L^2(G, \mathbb{C}^{s\times r})$. It is quite evident that if $T$  is a self adjoint operator on $L^2(G, \mathbb{C}^{s\times r})$, then the operator $T^2$ is positive and therefore $\big\{(I+T^2)E_k\big\}_{k \in \mathbb{N}}$ is a matrix-valued Riesz basis for $L^2(G, \mathbb{C}^{s\times r})$ for any orthonormal basis $\{E_k\}_{k \in \mathbb{N}}$ of $L^2(G, \mathbb{C}^{s\times r})$ if $T$ is adjointable with respect to matrix-valued inner product, by Lemma \ref{lemb} and Theorem \ref{thpi}. Not only this, two more Riesz bases can be obtained using the self-adjoint operator $T$. Before coming to the main result, we recall  Proposition 3.2 from \cite{PS} which states that every self-adjoint operator can be written as a difference of two positive operators.
\begin{prop} \cite[Proposition 3.2]{PS}\label{propp}
Let $T$ be a bounded, linear, and self-adjoint operator on a Hilbert space $\mathcal{H}$. Then, there exist unique positive operators $P_1$, $P_2$ such that $T=P_1-P_2$ and $P_1 P_2=0$. Precisely, the operators $P_1$, $P_2$ are given by
\begin{align*}
P_1=\frac{1}{2}\big[(T^2)^{1/2}+ T \big], \ P_2=\frac{1}{2}\big[(T^2)^{1/2}-T \big].
\end{align*}
\end{prop}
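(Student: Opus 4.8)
The plan is to recognize the asserted decomposition as the splitting of the self-adjoint operator $T$ into its positive and negative parts. First I would set $|T| := (T^2)^{1/2}$; this is legitimate because $T^2 = T^{*}T$ is positive (indeed $\langle T^2 f, f\rangle = \|Tf\|^2 \ge 0$ for all $f \in \mathcal{H}$), so Theorem \ref{thpo} provides a unique bounded positive square root $|T|$, and moreover $|T|$ is a norm-limit of polynomials in $T^2$, hence in $T$, so that $|T|$ commutes with $T$. I would then define $P_1 := \tfrac12(|T| + T)$ and $P_2 := \tfrac12(|T| - T)$. Both are self-adjoint, and by construction $P_1 - P_2 = T$ and $P_1 + P_2 = |T|$.

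The orthogonality relation is then immediate from commutativity and $|T|^2 = T^2$:
\begin{equation*}
P_1 P_2 = \tfrac14(|T| + T)(|T| - T) = \tfrac14\bigl(|T|^2 - T^2\bigr) = 0 = P_2 P_1 .
\end{equation*}
The substantial point -- and the step I expect to be the real obstacle -- is the positivity of $P_1$ and $P_2$, equivalently the operator inequality $-|T| \le T \le |T|$. The clean argument uses the continuous functional calculus for the self-adjoint operator $T$: the continuous functions $\phi_{\pm}(t) = \tfrac12(|t| \pm t) = \max\{\pm t, 0\}$ are nonnegative on $\sigma(T) \subset \mathbb{R}$, so $P_1 = \phi_{+}(T) \ge 0$ and $P_2 = \phi_{-}(T) \ge 0$. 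If one prefers to stay with the tools already in place, the same inequality follows by polynomial approximation: approximate $|t|$ uniformly on $[-\|T\|,\|T\|]$ by even polynomials $p_n$ with $p_n(t) \ge t$ there; since $p_n(t) - t$ is a polynomial nonnegative on $[-a,a]$ with $a = \|T\|$, it can be written as $\sigma_1(t)^2 + (a^2 - t^2)\sigma_2(t)^2$, so $p_n(T) - T = \sigma_1(T)^2 + (a^2 I - T^2)\sigma_2(T)^2$ is positive by Lemma \ref{lem1a} (the factor $a^2 I - T^2 \ge 0$ commutes with $\sigma_2(T)^2 \ge 0$); letting $n \to \infty$, with $p_n(T) \to |T|$ in operator norm, gives $T \le |T|$, and the same argument applied to $-T$ gives $-|T| \le T$. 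Hence $P_1, P_2 \ge 0$.

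Finally I would establish uniqueness. Suppose $T = Q_1 - Q_2$ with $Q_1, Q_2$ positive and $Q_1 Q_2 = 0$. Taking adjoints gives $Q_2 Q_1 = 0$, so $(Q_1 + Q_2)^2 = Q_1^2 + Q_2^2 = (Q_1 - Q_2)^2 = T^2$. Since $Q_1 + Q_2$ is positive, the uniqueness clause of Theorem \ref{thpo} forces $Q_1 + Q_2 = (T^2)^{1/2}$; solving the pair $Q_1 + Q_2 = (T^2)^{1/2}$, $Q_1 - Q_2 = T$ then yields $Q_1 = \tfrac12\bigl((T^2)^{1/2} + T\bigr) = P_1$ and $Q_2 = \tfrac12\bigl((T^2)^{1/2} - T\bigr) = P_2$, as claimed.
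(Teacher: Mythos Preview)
The paper does not supply its own proof of this proposition: it is quoted verbatim as \cite[Proposition 3.2]{PS} and used as a black box, so there is nothing in the paper to compare your argument against. Judged on its own merits, your proof is correct and is the standard one. The identities $P_1-P_2=T$, $P_1P_2=0$ and the uniqueness argument via $(Q_1+Q_2)^2=T^2$ together with the uniqueness clause of Theorem~\ref{thpo} are exactly right. The only place where you should tighten the exposition is the positivity step: the functional-calculus argument ($\phi_\pm(t)=\max\{\pm t,0\}\ge 0$ on $\sigma(T)$) is the clean route, while your alternative polynomial-approximation argument needs a word on why $p_n(T)\to (T^2)^{1/2}$ in norm when $p_n$ is chosen as an even polynomial approximating $|t|$ uniformly on $[-\|T\|,\|T\|]$ (this follows because $p_n(t)=q_n(t^2)$ and $q_n(s)\to\sqrt{s}$ uniformly on $[0,\|T\|^2]$, whence $q_n(T^2)\to(T^2)^{1/2}$ by the spectral radius estimate for self-adjoint operators, or simply by the construction in Theorem~\ref{thpo}).
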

As a consequence of Proposition \ref{propp}, we get the following result. The proof is based on Proposition \ref{prosq}, Lemma \ref{lemb} and Theorem \ref{thpi}.
\begin{thm}\label{thmmisc24}
Let $T$ be a bounded, linear, and  self-adjoint operator acting on $L^2(G, \mathbb{C}^{s\times r})$ that is adjointable with respect to matrix-valued inner product, and $\{E_k\}_{k \in \mathbb{N}}$ be a matrix-valued orthonormal basis for $L^2(G, \mathbb{C}^{s\times r})$. Then,
\begin{align*}
\Big\{\big(I+\frac{1}{2}\big((T^2)^{1/2}+T \big)\big)E_k\Big\}_{k \in \mathbb{N}} \ \text{and} \ \Big\{\big(I+\frac{1}{2}\big((T^2)^{1/2}-T \big)\big)E_k\Big\}_{k \in \mathbb{N}}
\end{align*}
are matrix-valued Riesz bases for $L^2(G, \mathbb{C}^{s\times r})$.
\end{thm}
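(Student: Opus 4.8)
The plan is to apply Proposition \ref{propp} to decompose $T=P_1-P_2$ with $P_1=\frac12\big((T^2)^{1/2}+T\big)$ and $P_2=\frac12\big((T^2)^{1/2}-T\big)$ both positive, and then invoke the machinery already built in the excerpt. First I would check that $P_1$ and $P_2$ are adjointable with respect to the matrix-valued inner product: by hypothesis $T$ is adjointable, hence $T^2$ is adjointable by Lemma \ref{lemb}(3), and since $T^2$ is positive (being $T^*T=T^2$ for self-adjoint $T$), Proposition \ref{prosq} gives that $(T^2)^{1/2}$ is adjointable with respect to the matrix-valued inner product. Then $P_1=\frac12\big((T^2)^{1/2}+T\big)$ and $P_2=\frac12\big((T^2)^{1/2}-T\big)$ are adjointable by Lemma \ref{lemb}(2) (noting that a scalar multiple of an adjointable operator is trivially adjointable, which one can either remark or absorb into a one-line observation).

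Next, since $P_1$ and $P_2$ are positive and adjointable, Theorem \ref{thpi}(1), applied with $S=P_1$ in one instance and $S=P_2$ in the other, yields directly that $\big\{(I+P_1)E_k\big\}_{k\in\mathbb{N}}$ and $\big\{(I+P_2)E_k\big\}_{k\in\mathbb{N}}$ are matrix-valued Riesz bases for $L^2(G,\mathbb{C}^{s\times r})$. Substituting the explicit forms of $P_1$ and $P_2$ from Proposition \ref{propp} gives exactly the two families in the statement, completing the proof.

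In short, the argument is a clean chaining of three earlier results: Proposition \ref{propp} for the decomposition, Proposition \ref{prosq} (together with Lemma \ref{lemb}) for the adjointability of the positive parts, and Theorem \ref{thpi}(1) for the Riesz basis conclusion. I do not anticipate a genuine obstacle here; the only mildly delicate point is confirming that the positivity and matrix-valued adjointability of $(T^2)^{1/2}$ transfer correctly, and this is precisely what Proposition \ref{prosq} was set up to handle. One should also remark that $T^2$ is indeed positive: for self-adjoint $T$ one has $\langle T^2\mathbf{f},\mathbf{f}\rangle_{L^2}=\langle T\mathbf{f},T\mathbf{f}\rangle_{L^2}=\|T\mathbf{f}\|^2\ge 0$, so Theorem \ref{thpo} applies to furnish $(T^2)^{1/2}$.
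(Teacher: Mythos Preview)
Your proof is correct and follows precisely the route the paper indicates: invoke Proposition \ref{propp} for the positive decomposition $T=P_1-P_2$, use Lemma \ref{lemb} and Proposition \ref{prosq} to secure matrix-valued adjointability of $(T^2)^{1/2}$ and hence of $P_1,P_2$, and then conclude via Theorem \ref{thpi}(1). The paper itself does not spell out the argument beyond naming these three ingredients, so your write-up is in fact a fuller version of the same proof.
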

Not only this, any linear, bounded, and  self-adjoint operator on $\mathcal{H}$  with norm equal to one can be written as a linear combination of two unitary operators, see Proposition 3.8 in \cite{PS} for details. To be precise, a bounded, linear, and  self-adjoint operator $T$ on $L^2(G, \mathbb{C}^{s\times r})$ with norm one can be written as
\begin{align}\label{eqsa}
T=\frac{1}{2}\Big(T+i\big(I-T^2\big)^{1/2}\Big)+\frac{1}{2}\Big(T-i\big(I-T^2\big)^{1/2}\Big)
\end{align}
where the operators $T \pm i \big(I-T^2\big)^{1/2}$ are unitary and hence bijective. Therefore, we get the following result for constructing matrix-valued Riesz bases for $L^2(G, \mathbb{C}^{s\times r})$. The proof follows by equation \eqref{eqsa}, Lemma \ref{lemb} and Proposition \ref{prosq}.

\begin{thm}\label{thsa2}
Let $T$ be a bounded, linear, and  self-adjoint operator acting on $L^2(G, \mathbb{C}^{s\times r})$ that is adjointable with respect to matrix-valued inner product, and $\{E_k\}_{k \in \mathbb{N}}$ be a matrix-valued orthonormal basis for $L^2(G, \mathbb{C}^{s\times r})$. Then,
\begin{align*}
\Big\{\Big(\frac{1}{\|T\|}\big[T+ i \big(\|T\|^2 I-T^2\big)^{1/2}\big]\Big)E_k\Big\}_{k \in \mathbb{N}} \ \text{and} \  \Big\{\Big(\frac{1}{\|T\|}\big[T- i \big(\|T\|^2 I -T^2\big)^{1/2}\big]\Big)E_k\Big\}_{k \in \mathbb{N}}
\end{align*}
are matrix-valued Riesz bases for $L^2(G, \mathbb{C}^{s\times r})$.
\end{thm}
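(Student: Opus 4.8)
The plan is to reduce the statement to the norm-one self-adjoint decomposition \eqref{eqsa} and then to check, step by step, that the resulting operators meet every requirement of Definition \ref{def3.2x}. We may assume $T \ne \mathbf{O}$ (otherwise the normalizing factor $1/\|T\|$ is not defined and there is nothing to prove), so that $\|T\| > 0$, and we set $S := \tfrac{1}{\|T\|}T$. Then $S$ is bounded, linear, self-adjoint, has $\|S\| = 1$, and is adjointable with respect to the matrix-valued inner product, being a scalar multiple of $T$.

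The first step is to establish the scaling identity for square roots. Since $T$ is self-adjoint, $T^2$ is positive in the usual sense, and $\|T\|^2 I - T^2 \ge 0$ because $\langle (\|T\|^2 I - T^2)\mathbf{f}, \mathbf{f}\rangle_{L^2} = \|T\|^2\|\mathbf{f}\|^2 - \|T\mathbf{f}\|^2 \ge 0$; hence by Theorem \ref{thpo} the operator $(\|T\|^2 I - T^2)^{1/2}$ exists and is positive. As $I - S^2 = \tfrac{1}{\|T\|^2}(\|T\|^2 I - T^2)$ and the positive square root is unique, it follows that $(I - S^2)^{1/2} = \tfrac{1}{\|T\|}(\|T\|^2 I - T^2)^{1/2}$, so that
\[
S \pm i\,(I - S^2)^{1/2} \;=\; \frac{1}{\|T\|}\Big[\, T \pm i\,\big(\|T\|^2 I - T^2\big)^{1/2} \,\Big] \;=:\; \mathcal{U}_{\pm}.
\]
Applying \eqref{eqsa} to the norm-one self-adjoint operator $S$, the operators $\mathcal{U}_{+}$ and $\mathcal{U}_{-}$ are unitary, hence bounded, linear, and bijective on $L^2(G, \mathbb{C}^{s\times r})$.

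It then remains to verify adjointability of $\mathcal{U}_{\pm}$ with respect to the matrix-valued inner product. Since $T$ is adjointable, so is $T^2 = T\cdot T$ by Lemma \ref{lemb}(3); the identity $I$ (and hence $\|T\|^2 I$) is trivially adjointable, and a scalar multiple of an adjointable operator is again adjointable, so $-T^2$ is adjointable and therefore $\|T\|^2 I - T^2$ is adjointable by Lemma \ref{lemb}(2). Being also positive, Proposition \ref{prosq} yields that its square root $(\|T\|^2 I - T^2)^{1/2}$ is adjointable with respect to the matrix-valued inner product. Consequently $i(\|T\|^2 I - T^2)^{1/2}$ is adjointable (scalar multiple), whence $T \pm i(\|T\|^2 I - T^2)^{1/2}$ is adjointable by Lemma \ref{lemb}(2), and finally $\mathcal{U}_{\pm}$ is adjointable as a further scalar multiple. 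Thus $\mathcal{U}_{\pm}$ is a bounded, linear, bijective operator on $L^2(G, \mathbb{C}^{s\times r})$ that is adjointable with respect to the matrix-valued inner product, so $\{\mathcal{U}_{\pm}E_k\}_{k\in\mathbb{N}}$ is a matrix-valued Riesz basis by Definition \ref{def3.2x}, which is the assertion of the theorem.

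I do not expect a genuine obstacle: the unitarity is imported from \cite{PS} via \eqref{eqsa}, and the only potentially delicate point — the adjointability of the square root — is precisely Proposition \ref{prosq}. The step that needs the most care is recognizing that $\mathcal{U}_{\pm}$ is exactly the unitary produced by the decomposition applied to $T/\|T\|$, i.e.\ the scaling identity $(I - S^2)^{1/2} = \tfrac{1}{\|T\|}(\|T\|^2 I - T^2)^{1/2}$, which rests on the uniqueness of the positive square root in Theorem \ref{thpo}.
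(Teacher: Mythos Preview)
Your argument is correct and follows essentially the same route as the paper: normalize to a norm-one self-adjoint operator, invoke \eqref{eqsa} to obtain unitarity, and then use Lemma \ref{lemb} together with Proposition \ref{prosq} to secure matrix-valued adjointability. You have simply spelled out the intermediate steps (the scaling identity for the square root and the case $T\neq\mathbf{O}$) that the paper leaves implicit.
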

We now see that even the self-adjointness of a bounded linear operator on  $T$ on $L^2(G, \mathbb{C}^{s\times r})$ can be omitted and we still obtain some matrix-valued Riesz bases for $L^2(G, \mathbb{C}^{s\times r})$. Using the Proposition 3.8 in \cite{PS} again, we see that any bounded linear operator $T$ on $L^2(G, \mathbb{C}^{s\times r})$ can be written as a linear combination of two self-adjoint operators. That is,
\begin{align}\label{eqsa2}
T=\frac{1}{2}(T+T^*)+\frac{i}{2}\Big(\frac{T-T^*}{i}\Big)
\end{align}
where the operators $T+T^*$ and $\frac{T-T^*}{i}=-i(T-T^*)$ are self-adjoint. Therefore, using equation \eqref{eqsa2} and Theorem \ref{thsa2}, we get the following result.
\begin{thm}\label{clarefii}
Let $\{E_k\}_{k \in \mathbb{N}}$ be a matrix-valued orthonormal basis for $L^2(G, \mathbb{C}^{s\times r})$. Let $T$ be a bounded linear operator on $L^2(G, \mathbb{C}^{s\times r})$ that is adjointable with respect to matrix-valued inner product. Then, for each $1 \le i \le 4$ , $\{\Omega_i E_k\Big\}_{k \in \mathbb{N}}$ is a matrix-valued Riesz basis for $L^2(G, \mathbb{C}^{s\times r})$ where $\Omega_i$ are the following operators:
\begin{enumerate}[$(1)$]
 \item $\Omega_1=\frac{1}{\|T+T^*\|}\Big(T+T^*+ i \big(\|T+T^*\|^2I-(T+T^*)^2\big)^{1/2}\Big)$.\\
\item $\Omega_2=\frac{1}{\|T+T^*\|}\Big(T+T^*- i \big(\|T+T^*\|^2I-(T+T^*)^2\big)^{1/2}\Big)$.\\
 \item $\Omega_3=\frac{-i}{\|T-T^*\|}\Big(T-T^*-\big(\|T-T^*\|^2I+(T-T^*)^2\big)^{1/2}\Big)$.\\
\item $\Omega_4=\frac{-i}{\|T-T^*\|}\Big(T-T^*+\big(\|T-T^*\|^2I+(T-T^*)^2\big)^{1/2}\Big)$.\\
\end{enumerate}
\end{thm}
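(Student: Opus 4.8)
The plan is to reduce the statement to Theorem~\ref{thsa2} by invoking the Cartesian decomposition \eqref{eqsa2}. Write $T=\tfrac12(T+T^{*})+\tfrac{i}{2}\bigl(\tfrac{T-T^{*}}{i}\bigr)$ and put $A:=T+T^{*}$ and $B:=\tfrac{T-T^{*}}{i}=-i(T-T^{*})$. Both are bounded and self-adjoint, since $A^{*}=T^{*}+T=A$ and $B^{*}=\overline{(-i)}\,(T-T^{*})^{*}=i(T^{*}-T)=B$.

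First I would verify that $A$ and $B$ are adjointable with respect to the matrix-valued inner product \eqref{dminp}. Taking conjugate transposes in $\langle T\mathbf{f},\mathbf{g}\rangle=\langle\mathbf{f},T^{*}\mathbf{g}\rangle$ and using property $(ii)$ of \eqref{dminp} yields $\langle T^{*}\mathbf{g},\mathbf{f}\rangle=\langle\mathbf{g},T\mathbf{f}\rangle$ for all $\mathbf{f},\mathbf{g}$, so $T^{*}$ is itself adjointable with matrix-valued adjoint $T$. Since scalar multiples of adjointable operators are adjointable (immediate from \eqref{dminp}) and sums of adjointable operators are adjointable (Lemma~\ref{lemb}$(2)$), both $A=T+T^{*}$ and $B=-iT+iT^{*}$ are adjointable with respect to the matrix-valued inner product.

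Second, applying Theorem~\ref{thsa2} to the self-adjoint, matrix-valued-adjointable operator $A$ (with $\|A\|>0$, which the statement presupposes, as $\|A\|$ sits in a denominator) immediately shows that $\{\Omega_{1}E_{k}\}_{k\in\mathbb{N}}$ and $\{\Omega_{2}E_{k}\}_{k\in\mathbb{N}}$ are matrix-valued Riesz bases, since $\Omega_{1},\Omega_{2}$ are exactly the operators $\tfrac{1}{\|A\|}\bigl(A\pm i(\|A\|^{2}I-A^{2})^{1/2}\bigr)$ furnished by that theorem. For $\Omega_{3},\Omega_{4}$ I would rewrite them in terms of $B$: from $T-T^{*}=iB$ one gets $(T-T^{*})^{2}=-B^{2}$ and $\|T-T^{*}\|=\|B\|$, hence $\|T-T^{*}\|^{2}I+(T-T^{*})^{2}=\|B\|^{2}I-B^{2}$. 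Substituting and cancelling the factor $-i$ gives
\begin{align*}
\Omega_{3}=\frac{1}{\|B\|}\Bigl(B+i\bigl(\|B\|^{2}I-B^{2}\bigr)^{1/2}\Bigr),\qquad \Omega_{4}=\frac{1}{\|B\|}\Bigl(B-i\bigl(\|B\|^{2}I-B^{2}\bigr)^{1/2}\Bigr),
\end{align*}
which are precisely the two operators produced by Theorem~\ref{thsa2} applied to the self-adjoint, matrix-valued-adjointable operator $B$; hence $\{\Omega_{3}E_{k}\}_{k\in\mathbb{N}}$ and $\{\Omega_{4}E_{k}\}_{k\in\mathbb{N}}$ are matrix-valued Riesz bases for $L^{2}(G,\mathbb{C}^{s\times r})$.

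The only genuine point of care is the last rewriting: $T-T^{*}$ is skew-adjoint rather than self-adjoint, so a priori $\Omega_{3},\Omega_{4}$ do not look like outputs of Theorem~\ref{thsa2}; one has to notice that passing to $B=-i(T-T^{*})$ simultaneously makes the operator self-adjoint and converts $\|T-T^{*}\|^{2}I+(T-T^{*})^{2}$ into the required $\|B\|^{2}I-B^{2}$. Everything else is bookkeeping — in particular, adjointability of the square roots $(\|A\|^{2}I-A^{2})^{1/2}$ and $(\|B\|^{2}I-B^{2})^{1/2}$ is already built into Theorem~\ref{thsa2} (ultimately via Proposition~\ref{prosq}), so it need not be reproved here.
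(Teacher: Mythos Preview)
Your proposal is correct and follows exactly the approach sketched in the paper: decompose $T$ via \eqref{eqsa2} into the self-adjoint parts $A=T+T^{*}$ and $B=-i(T-T^{*})$, then apply Theorem~\ref{thsa2} to each. Your write-up in fact supplies the details the paper omits, in particular the verification that $T^{*}$ (hence $A$ and $B$) is matrix-valued adjointable and the algebraic rewriting showing $\Omega_{3},\Omega_{4}$ coincide with the operators Theorem~\ref{thsa2} produces from $B$.
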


Next, we give definition of the dual Riesz basis of given matrix-valued Riesz basis of $L^2(G, \mathbb{C}^{s\times r})$. Let  $\{\textbf{f}_k\}_{k \in \mathbb{N}}=\{U E_k\}_{k \in \mathbb{N}}$ be a matrix-valued Riesz basis for $L^2(G, \mathbb{C}^{s\times r})$, then
\begin{enumerate}[$(i)$]
  \item
$\big\langle (U^{-1})^*\textbf{f}, \textbf{g} \big\rangle=\big\langle (U^{-1})^*\textbf{f}, U U^{-1}\textbf{g} \big\rangle
=\big\langle (U^{-1}U)^*\textbf{f}, U^{-1}\textbf{g}\big \rangle
=\langle \textbf{f}, U^{-1}\textbf{g} \rangle$ for all $\textbf{f}$,  $\textbf{g} \in L^2(G, \mathbb{C}^{s\times r})$.
Thus,  $(U^{-1})^*$ is a bounded and bijective operator which is adjointable with respect to the matrix-valued inner product.
 \item
$\textbf{f}= U U^{-1}\textbf{f}
=\sum\limits_{k \in \mathbb{N}}\langle U^{-1}\textbf{f}, E_k \rangle \mathcal{U} E_k
 =\sum\limits_{k \in \mathbb{N}}\langle \textbf{f}, (U^{-1})^*E_k \rangle U E_k$ for all  $\textbf{f} \in L^2(G, \mathbb{C}^{s\times r})$.
\end{enumerate}
Choose $\textbf{g}_k= (U^{-1})^*E_k$, $k \in \mathbb{N}$. Then, $\textbf{f}=\sum_{k \in \mathbb{N}}\langle \textbf{f}, \textbf{g}_k \rangle \textbf{f}_k \ \text{for all}\ \textbf{f} \in L^2(G, \mathbb{C}^{s\times r})$.
The sequence $\{\textbf{g}_k\}_{k \in \mathbb{N}}$  is known as  \emph{dual Riesz basis} of $\{\textbf{f}_k\}_{k \in \mathbb{N}}$. We also say that  $\big(\{\textbf{f}_k\}_{k \in \mathbb{N}}, \{\textbf{g}_k\}_{k \in \mathbb{N}}\big)$ is a dual Riesz basis pair.

For a dual Riesz basis pair $\big(\{\textbf{f}_k\}_{k \in \mathbb{N}}, \{\textbf{g}_k\}_{k \in \mathbb{N}}\big)$ in the Hilbert space $L^2(G)$, by Theorem 3.1 in \cite{HolubII}, the operator mapping $\textbf{f}_k$ to $\textbf{g}_k$, $k \in \mathbb{N}$  is a positive operator. This may not be true in the matrix-valued space $L^2(G, \mathbb{C}^{s\times r})$. This is justified in the following example:
\begin{exa}\label{exH}
For an orthonormal basis $\{e_k\}_{k \in \mathbb{N}}$ of $L^2(G)$, let $\textbf{f}_k = \text{diag}(e_k, e_k)$, $k \in \mathbb{N}$.
Then, $\{\textbf{f}_k\}_{k \in \mathbb{N}}$ is a matrix-valued orthonormal basis and hence a matrix-valued Riesz basis for $L^2(G, \mathbb{C}^{2\times 2})$ with the dual Riesz basis $\{\textbf{g}_k\}_{k \in \mathbb{N}}=\{\textbf{f}_k\}_{k \in \mathbb{N}}$.
Define $U: L^2(G, \mathbb{C}^{2\times 2}) \rightarrow  L^2(G, \mathbb{C}^{2\times 2})$ by
\begin{align*}
U: \textbf{f} \mapsto
 \begin{bmatrix}
  f_{11}&f_{21}\\
  f_{12}&f_{22}
  \end{bmatrix}, \ \textbf{f} = \big[f_{ij}\big]_{1 \leq i, j \leq2} \in L^2(G, \mathbb{C}^{2\times 2}).
\end{align*}
Then, $U\textbf{f}_k=\textbf{g}_k, k \in \mathbb{N}$, but $U$ is not a positive operator. Indeed, for an arbitrary but fixed $0 \ne f \in L^2(G)$,  $\textbf{f}=\begin{bmatrix}
 0&f\\
- f&f
\end{bmatrix} \in L^2(G, \mathbb{C}^{2\times 2})$ such that
\begin{align*}
\text{tr} \langle U \textbf{f}, \textbf{f}\rangle
&= \text{tr}  \int\limits_{G} \begin{bmatrix}
  0&-f\\
f&f
  \end{bmatrix}\begin{bmatrix}
 \overline{0} & -\overline{f}\\
  \overline{f} & \overline{f}
  \end{bmatrix} d\mu_{G} = -\|f\|^2 <0.
\end{align*}
Hence, $U$ is not positive.
\end{exa}
Note that the operator $U$ in Example \ref{exH} is not adjointable with respect to the matrix-valued inner product on $L^2(G, \mathbb{C}^{2\times 2})$. We see that Theorem 3.1 of \cite{HolubII} due to Holub holds in the space $L^2(G, \mathbb{C}^{s\times r})$ only if an additional condition of matrix-valued adjointability is imposed on the underlying operator. Precisely, we will prove that the matrix-valued adjointable operator mapping $\textbf{f}_k$ to $\textbf{g}_k$, $k \in \mathbb{N}$  is a positive operator. Conversely, every positive operator, that is bijective and adjointable with respect to matrix-valued inner product, maps some matrix-valued Riesz basis to its matrix-valued dual Riesz basis.

\begin{thm}\label{mainthmI}
Let $T$ be a bounded, linear, and bijective operator acting on $L^2(G, \mathbb{C}^{s\times r})$  which is adjointable with respect to the matrix-valued inner product. Then, $T$ is positive if and only if $T$ maps a matrix-valued Riesz basis of $L^2(G, \mathbb{C}^{s\times r})$ to its matrix-valued dual Riesz basis.
\end{thm}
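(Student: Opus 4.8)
The plan is to prove both implications by combining the functional calculus of positive operators from Theorem~\ref{thpo}, the adjointability of square roots from Proposition~\ref{prosq}, and the explicit description $\{\textbf{g}_k\}=\{(U^{-1})^{*}E_k\}$ of the dual Riesz basis of $\{\textbf{f}_k\}=\{UE_k\}$ recalled just before the statement.

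\emph{Sufficiency.} Suppose $T$ is positive. I would fix a matrix-valued orthonormal basis $\{E_k\}_{k\in\mathbb{N}}$ and put $\textbf{f}_k=T^{-1/2}E_k$. The first task is to check that $T^{-1/2}$ is an admissible generating operator in the sense of Definition~\ref{def3.2x}: since $T$ is bounded and bijective, $T^{-1}$ is bounded (open mapping theorem), positive, and, by Lemma~\ref{lemb}, adjointable with respect to the matrix-valued inner product; hence Proposition~\ref{prosq} applied to $T^{-1}$ shows that $T^{-1/2}=(T^{-1})^{1/2}$ is bounded, positive, bijective (with inverse $T^{1/2}$), and adjointable with respect to the matrix-valued inner product. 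Thus $\{\textbf{f}_k\}_{k\in\mathbb{N}}$ is a matrix-valued Riesz basis, and its dual is $\textbf{g}_k=\big((T^{-1/2})^{-1}\big)^{*}E_k=(T^{1/2})^{*}E_k=T^{1/2}E_k$, where the last step uses that $T^{1/2}$, being positive, is Hilbert-self-adjoint. Since $T^{1/2}$ is a limit of polynomials in $T$ and $T^{-1/2}=(T^{1/2})^{-1}$, all three operators commute and $T\,T^{-1/2}=T^{1/2}$; therefore $T\textbf{f}_k=T^{1/2}E_k=\textbf{g}_k$ for every $k$, so $T$ carries the matrix-valued Riesz basis $\{\textbf{f}_k\}$ onto its dual.

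\emph{Necessity.} For the converse I would start from the hypothesis $T\textbf{f}_k=\textbf{g}_k$ for all $k$, write $\textbf{f}_k=UE_k$ with $U$ bounded, bijective and adjointable with respect to the matrix-valued inner product, and use $\textbf{g}_k=(U^{-1})^{*}E_k$. By Lemma~\ref{lemb} the operator $TU$ is adjointable with respect to the matrix-valued inner product, and so is $(U^{-1})^{*}$, since adjointability is inherited by the Hilbert-adjoint: using properties $(i)$ and $(ii)$ of the matrix-valued inner product one gets $\langle V^{*}\textbf{f},\textbf{g}\rangle=\langle V\textbf{g},\textbf{f}\rangle^{*}=\langle\textbf{f},V\textbf{g}\rangle$ for adjointable $V$. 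Now, by Lemma~\ref{lem1} together with the expansion in Definition~\ref{don} and continuity, every bounded operator $A$ that is adjointable with respect to the matrix-valued inner product satisfies $A\textbf{f}=\sum_{k}\langle\textbf{f},E_k\rangle\,AE_k$ for all $\textbf{f}$, so such an operator is uniquely determined by $\{AE_k\}_{k}$. Since $TU$ and $(U^{-1})^{*}$ agree on each $E_k$, they coincide, whence $T=(U^{-1})^{*}U^{-1}=(UU^{*})^{-1}$; as $UU^{*}$ is positive and invertible, $T=(UU^{*})^{-1}$ is positive, and the proof is finished.

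\emph{Main obstacle.} The delicate step is the one just used: deducing the operator identity $TU=(U^{-1})^{*}$ from the pointwise equalities $TUE_k=(U^{-1})^{*}E_k$. This depends on the module identity of Lemma~\ref{lem1}, and hence on adjointability — precisely the hypothesis whose absence makes the statement false, as Example~\ref{exH} shows. Everything else (positivity and adjointability of $T^{-1}$ and of $UU^{*}$, commutativity within the functional calculus of $T$, and self-adjointness of positive operators) is routine, and I would only mention it in passing.
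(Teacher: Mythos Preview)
Your proof is correct and follows essentially the same route as the paper: for the necessity you derive $TU=(U^{-1})^{*}$ via Lemma~\ref{lem1} and the expansion of Definition~\ref{don}, exactly as the paper does, and for the sufficiency you use the same dual pair $\bigl(\{T^{-1/2}E_k\},\{T^{1/2}E_k\}\bigr)$, the only cosmetic difference being that the paper names $\{T^{1/2}E_k\}$ as the Riesz basis and $\{(T^{1/2})^{-1}E_k\}$ as its dual before applying $T$. Your identification of the operator-identity step as the crux, resting on adjointability through Lemma~\ref{lem1}, is precisely the point the paper emphasises via Example~\ref{exH}.
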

\begin{proof}
First, let $\{\textbf{f}_k\}_{k \in \mathbb{N}}=\{U E_k\}_{k \in \mathbb{N}}$ and $\{\textbf{g}_k\}_{k \in \mathbb{N}}=\{(U^{-1})^*E_k\}_{k \in \mathbb{N}}$ be a pair of matrix-valued dual Riesz bases and
$T\textbf{f}_k= \textbf{g}_k$, for all $k \in \mathbb{N}$. That is, $TU E_k= (U^{-1})^*E_k, k \in \mathbb{N}$. Using Lemma \ref{lemb} and Lemma \ref{lem1}, for every $\textbf{f} \in L^2(G, \mathbb{C}^{s\times r})$, we compute
\begin{align*}
TU \textbf{f}&= TU \sum\limits_{k \in \mathbb{N}}\langle \textbf{f}, E_k \rangle E_k\\
&=\sum\limits_{k \in \mathbb{N}}\langle \textbf{f}, E_k \rangle TU E_k\\
&=\sum\limits_{k \in \mathbb{N}}\langle \textbf{f}, E_k \rangle (U^{-1})^* E_k\\
&=(U^{-1})^*\sum\limits_{k \in \mathbb{N}}\langle \textbf{f}, E_k \rangle E_k\\
&=(U^{-1})^* \textbf{f}.
\end{align*}
This gives $TU=(U^{-1})^*$, and hence $T=(U^{-1})^* U^{-1}$ which clearly is a positive operator.

Conversely, assume now that $T$ is positive. Then, by Theorem \ref{thpo} and Proposition \ref{prosq}, $\{T^{1/2} E_k\}_{k \in \mathbb{N}}$ is a matrix-valued Riesz basis of $L^2(G, \mathbb{C}^{s\times r})$, where $\{E_k\}_{k \in \mathbb{N}}$ is a matrix-valued orthonormal basis of $L^2(G, \mathbb{C}^{s\times r})$.  The matrix-valued dual Riesz basis of  $\{\textbf{h}_k\}_{k \in \mathbb{N}}=\{T^{1/2} E_k\}_{k \in \mathbb{N}}$ is  given by
\begin{equation}
\{\textbf{q}_k\}_{k \in \mathbb{N}}=\{((T^{1/2})^{-1})^* E_k\}_{k \in \mathbb{N}}=\{(T^{1/2})^{-1} E_k\}_{k \in \mathbb{N}}. \nonumber
\end{equation}
For every $k \in \mathbb{N}$, we have
\begin{align*}
T\textbf{q}_k= T(T^{1/2})^{-1} E_k= T^{1/2}T^{1/2}(T^{1/2})^{-1} E_k=T^{1/2} E_k= \textbf{h}_k.
\end{align*}
Hence, $T$ maps the matrix-valued Riesz basis $\{\textbf{q}_k\}_{k \in \mathbb{N}}$ to its dual Riesz basis $\{\textbf{h}_k\}_{k \in \mathbb{N}}$. The proof is complete.
\end{proof}

\section{Conclusion}
The construction of frames and bases by the action of operators on a given frame, basis, or vector in the given signal space plays  an important role in a variety of problems in frame decompositions of signals \cite{AA3}, new classes of frames from given frames \cite{AA1}, dynamical sampling \cite{AA2, AA4},  etc.  Additionally, the weaving properties of Riesz bases, which are related to distributive signal processing \cite{Deep1, Deep2}, yield deeper insight into the stable analysis of functions in $L^2(G, \mathbb{C}^{s\times r})$ in terms of Riesz bases. In this study, we provided new classes of bounded linear operators on  $L^2(G, \mathbb{C}^{s\times r})$ such that their images on matrix-valued orthonormal bases for $L^2(G, \mathbb{C}^{s\times r})$ constitute bases and frames for the space $L^2(G, \mathbb{C}^{s\times r})$. These classes of operators will be useful in the study of various types of multiresolution analyses \cite{Jind, NPS} and  data analysis in higher-dimensional signal spaces \cite{Zpelle}. A characterization of positive isomorphisms on  $\mathcal{H}$ in terms of Riesz bases of $\mathcal{H}$ due to Holub  \cite{HolubII} holds only for the class of bounded, linear, and bijective operators acting on  $L^2(G, \mathbb{C}^{s\times r})$ that are adjointable with respect to matrix-valued inner product.  Our results will stimulate the development of Riesz bases in the matrix-valued function space $L^2(G, \mathbb{C}^{s\times r})$. For example,  the study of the  near-Riesz basis \cite{Holub} and its relation to frames in  $L^2(G, \mathbb{C}^{s\times r})$ is one of the important directions for research.

\bibliographystyle{amsplain}

\mbox{}

\end{document}